\newtheorem{theorem}{Theorem}[section]
\newtheorem*{ttheorem}{Theorem}
\newtheorem{proposition}[theorem]{Proposition}
\newtheorem{corollary}[theorem]{Corollary}
{\theoremstyle{remark}
\newtheorem{remark}[theorem]{Remark}

}
\newcommand{\savefootnote}[2]{\footnote{\label{#1}#2}}
\newcommand{\repeatfootnote}[1]{\textsuperscript{\ref{#1}}}
\theoremstyle{definition}
\newtheorem{definition}[theorem]{Definition}
\newtheorem{example}[theorem]{Example}
\newcommand{\T}{{\mathcal{T}}}
\newcommand{\CC}{\mathbb{C}}
\renewcommand{\L}{\mathcal{L}}
\newcommand{\RR}{\mathbb{R}}
\newcommand{\QQ}{\mathbb{Q}}
\newcommand{\ZZ}{\mathbb{Z}}
\newcommand{\NN}{\mathbb{N}}
\newcommand{\E}{\mathfrak{E}}
\newcommand{\PP}{\mathbb{P}}
\newcommand{\A}{\mathbb{A}}
\newcommand{\CO}{{\mathcal{O}}}
\newcommand{\cA}{{\mathcal{A}}}
\newcommand{\X}{{\mathcal{X}}}
\newcommand{\D}{{\mathfrak{D}}}
\DeclareMathOperator{\TV}{TV}
\DeclareMathOperator{\DF}{DF}
\DeclareMathOperator{\face}{face}
\DeclareMathOperator{\rk}{rank}
\DeclareMathOperator{\tail}{tail}
\DeclareMathOperator{\wdiv}{Div}
\DeclareMathOperator{\spec}{Spec}
\DeclareMathOperator{\proj}{Proj}
\DeclareMathOperator{\ord}{ord}
\DeclareMathOperator{\supp}{supp}
\DeclareMathOperator{\conv}{conv}
\DeclareMathOperator{\vol}{vol}
\DeclareMathOperator{\Aut}{Aut}
\DeclareMathOperator{\Div}{div}
\DeclareMathOperator{\discr}{discr}
\def\poldivdpi{
 \begin{tikzpicture}[scale=0.4]
    \draw[dotted,step=1,gray] (-2,-2) grid (2,2);
    \draw[dashed,gray] (-2,0) -- (2,0);
    \draw[dashed,gray] (0,-2) -- (0,2);
   \draw (-1,1) -- (0,1) -- (1,0);
  \end{tikzpicture}
}
\def\poldivdpii{
  \begin{tikzpicture}[scale=0.4]
    \draw[dotted,step=1,gray] (-2,-2) grid (2,2);
    \draw[dashed,gray] (-2,0) -- (2,0);
    \draw[dashed,gray] (0,-2) -- (0,2);
   \draw (-1,-1) -- (0,0) -- (1,0);
  \end{tikzpicture}
}
\def\poldivdpiii{
  \begin{tikzpicture}[scale=0.4]
    \draw[dotted,step=1,gray] (-2,-2) grid (2,2);
    \draw[dashed,gray] (-2,0) -- (2,0);
    \draw[dashed,gray] (0,-2) -- (0,2);
    \draw (-1,0) -- (0,1) -- (1,0);
  \end{tikzpicture}
}
\def\poldivdpiv{
  \begin{tikzpicture}[scale=0.4]
    \draw[dotted,step=1,gray] (-2,-2) grid (2,2);
    \draw[dashed,gray] (-2,0) -- (2,0);
    \draw[dashed,gray] (0,-2) -- (0,2);
    \draw (-1,0) -- (0,1) -- (1,0);
  \end{tikzpicture}
}
\def\polytopedp{\begin{tikzpicture}[scale=0.4]
    \draw[dotted,step=1,gray] (-2,-2) grid (2,2);
    \draw[dashed,gray] (-2,0) -- (2,0);
    \draw[dashed,gray] (0,-2) -- (0,2);
    \draw (-1,0) -- (0,-1) -- (1,0);
    \draw (-1,0) -- (0,1) -- (1,0);
\end{tikzpicture}
}
\def\poldivdp{
 \poldivdpi\hspace{0.3cm}
 \poldivdpii\hspace{0.3cm}
 \poldivdpiii
}
\def\poldivdpsum{
 \raisebox{-0.7cm}{\poldivdpi} + \raisebox{-0.7cm}{\poldivdpii} = \raisebox{-0.7cm}{\poldivdpiv}
}
\def\poldivdpcombine{
 \raisebox{-0.7cm}{\poldivdpiii} \scalebox{1}[-1]{$\curvearrowleft$} \raisebox{-0.7cm}{\poldivdpiv} $\leadsto$ \raisebox{-0.7cm}{\polytopedp}
}
\title{K-stability for Fano manifolds with torus action of complexity one}
\author[N. Ilten]{Nathan Ilten}
\author[H. S\"u{\ss}]{Hendrik S\"u\ss}
\address{Nathan Ilten\\
Deparment of Mathematics,
Simor Fraser University,
8888 University Drive,
Burnaby BC V5A1S6 Canada}
\email{\href{mailto:nilten@sfu.ca}{nilten@sfu.ca}}
\address{Hendrik S\"u\ss\\ School of Mathematics,
The University of Manchester,
Alan Turing Building,
Oxford Road
Manchester M13 9PL}
\email{\href{mailto:hendrik.suess@manchester.ac.uk}{hendrik.suess@manchester.ac.uk}}
\subjclass[2010]{32Q20 (Primary) 14L30, 14J45 (Secondary)}
\keywords{K-stability, K\"ahler-Einstein metric, $T$-varieties, torus action, Fano varieties}
\thanks{The authors thank the International Centre for Mathematical Sciences for partial support}
\begin{document}
\maketitle
\begin{abstract}
We consider Fano manifolds admitting an algebraic torus action with general orbit of codimension one. Using a recent result of 
Datar and Sz\'ekelyhidi, we effectively determine the existence of K\"ahler-Ricci solitons for those manifolds via the notion of \emph{equivariant} K-stability. This allows us to give new examples of K\"ahler-Einstein Fano threefolds, and Fano threefolds admitting a non-trivial K\"ahler-Ricci soliton.
\end{abstract}

\section{Introduction}
Thanks to the proof of the Yau-Tian-Donaldson conjecture for Fano manifolds
\cite{kestab,zbMATH06394344,zbMATH06394345,zbMATH06394346, tian15} there is a completely algebraic characterization for a Fano manifold $X$ to admit a K\"ahler-Einstein metric through the notion of K-stability. In concrete cases, however, this criterion is far from being effective. Indeed, one has to check the positivity of the Donaldson-Futaki invariant for \emph{all} possible degenerations.

In a recent paper, Datar and Sz\'ekelyhidi proved that given the action of a reductive group $G$ on $X$, it suffices to consider only equivariant degenerations. For certain classes of varieties this \emph{equivariant} version of K-stability can be checked effectively. For example, for toric varieties  there are no degenerations beside the trivial one left to consider. Hence, one recovers the result of Wang and Zhu \cite{wang04}.

In this paper we consider a generalization of toric Fano varieties, namely, Fano $T$-varieties of complexity one. These are varieties admitting an algebraic torus action with maximal orbits of codimension one. Similar to toric varieties, these varieties come with a combinatorial description. We describe the equivariant degenerations in terms of the corresponding combinatorial data and calculate the corresponding Donaldson-Futaki invariant. In particular, we show that there are only finitely many degenerations to consider; this leads to an effective criterion for equivariant K-stability and hence the existence of a K\"ahler-Einstein metric. Applying this to the combinatorial description of Fano threefolds with 2-torus action from \cite{t3folds} yields the following theorem:

\begin{ttheorem}[See Theorem \ref{thm:threefold}]
  The Fano threefolds Q, 2.24\savefootnote{one-of-family}{This refers only to a particular element of the family admitting a 2-torus action}, 2.29, 2.32, 3.10\repeatfootnote{one-of-family}, 3.19, 3.20, 4.4 and 4.7 from  Mori and Mukai's classification \cite{mori-mukai} are K\"ahler-Einstein.
\end{ttheorem}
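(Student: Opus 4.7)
The plan is to apply, for each of the nine threefolds in the statement, the effective criterion for equivariant K-stability set up earlier in this paper. Fix one of them, call it $X$, together with its acting $2$-torus $T$ of complexity one. Since $T$ extends to a maximal reductive subgroup of $\Aut(X)$, the theorem of Datar--Sz\'ekelyhidi reduces verification of K-stability of $X$ to checking strict positivity of the Donaldson--Futaki invariant on all $T$-equivariant special test configurations, and the Yau--Tian--Donaldson correspondence then produces the K\"ahler--Einstein metric.

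The first concrete step is to read off from \cite{t3folds} the combinatorial data describing $X$: the divisorial fan on $\PP^1$, the polyhedral coefficients at the marked points, and the moment polytope $\Delta$ associated to the anticanonical polarization. Because the goal is a K\"ahler--Einstein metric rather than merely a Ricci soliton, I would then verify that the Futaki character of $X$ vanishes; combinatorially this amounts to checking that a suitably weighted barycenter of $\Delta$ sits at the origin, a single linear computation per case.

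The main step is to enumerate and test the finitely many equivariant special test configurations provided by the combinatorial classification developed in the body of the paper. These fall into two natural families: one-parameter degenerations whose special cocharacter lies in the cocharacter lattice of $T$, and degenerations arising from moving a vertex in a polyhedral coefficient over one of the marked points of $\PP^1$. For each candidate I would evaluate $\DF$ using the explicit integral formula proved earlier and verify that the resulting rational number is strictly positive.

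The main obstacle is bookkeeping rather than conceptual. Unlike the toric setting, where the trivial degeneration is the only equivariant one, a complexity-one $T$-variety typically admits several classes of vertex motions at each marked point of $\PP^1$, and overlooking a single one of them would leave a potential destabilization unchecked. The more involved cases on the list (such as 2.29, 4.4, and 4.7, whose divisorial fans carry several nontrivial polyhedral coefficients) will therefore require a careful case-by-case table listing every equivariant test configuration together with the corresponding $\DF$-value; the remaining cases, like $Q$ and 2.32, should be essentially immediate from the combinatorial formula.
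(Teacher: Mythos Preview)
Your plan is essentially the paper's: apply the combinatorial K-stability criterion (Theorem~\ref{thm:k-stability}) to the divisorial polytopes listed in \cite{t3folds}, and invoke Datar--Sz\'ekelyhidi. So the overall strategy is correct.

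That said, your description of the enumeration step is both imprecise and more laborious than necessary. The paper does not classify equivariant special test configurations as ``vertex motions'' at marked points; rather, each non-trivial one is of the form $\X(Q,v,m)$ for a choice of point $Q\in\PP^1$ and cocharacter data $(v,m)$. The key simplification, which you underuse, is Corollary~\ref{cor:df-formula}: for fixed $Q$ the Donaldson--Futaki invariant is $\langle b_Q,(-mv,m)\rangle$, so positivity for \emph{all} $(v,m)$ collapses to the single condition $b_Q\in\{0\}\times\RR^+$. Combined with Corollary~\ref{cor:normal-special-fiber}, which tells you exactly which $Q$ give normal special fibre, there is no ``case-by-case table listing every equivariant test configuration'': one computes a handful of polytope barycenters per threefold and checks that the $M$-component vanishes and the $\RR$-component is strictly positive. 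The Futaki-vanishing step you single out is already subsumed in the barycenter condition $p_1(b_Q)=0$, which is independent of $Q$. So the cases you flag as ``more involved'' (2.29, 4.4, 4.7) are no harder in kind than the others.
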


\noindent To our knowledge, the examples 2.29, 3.19. 3.20, 4.4, and 4.7 are the first cases where the existence of a K\"ahler-Einstein metric has been verified via K-stability and not any other method.

We now sketch how equivariant degenerations and their Donaldson-Futaki invariants may be described for a complexity-one Fano $T$-variety.
Roughly speaking, a polarized $T$-variety of complexity one corresponds to the combinatorial data of a finite set of piecewise linear functions $\Psi_i$ over some lattice polytope $\Box$ in a vector space $M_\RR$. For example, the functions over the line segment $\Box=[-1,1] \subset \RR$ with graphs sketched in Figure~\ref{fig:dp1} correspond to a singular del Pezzo surface of degree $4$.
\begin{figure}[h]
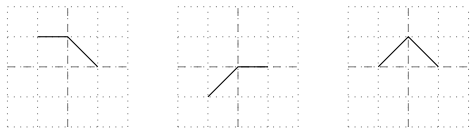

  \centering
  \poldivdp
  \caption{Combinatorial data for a singular del Pezzo surface:  piecewise linear functions $\Psi_1$, $\Psi_2$ and $\Psi_3$ on $[-1,1]$.}
  \label{fig:dp1}
\end{figure}

We prove that equivariant degenerations of this $T$-variety essentially correspond to the choice of one of these functions say $\Psi_j$. Its graph serves as the upper boundary of a polytope $\Delta_j \subset M_\RR \times \RR$, which describes the toric special fiber. The lower boundary of $\Delta_j$ is formed by the graph of the negative sum of the remaining functions. For example, if we pick the third of the three functions from Figure~\ref{fig:dp1}, then the graph of the sum of the remaining functions is sketched in Figure~\ref{fig:dpsum}. Now, the polytope on the right hand side of Figure~\ref{fig:dpglue} is bounded from above by $\Psi_3$ and from below by $-(\Psi_1 + \Psi_2)$.
\begin{figure}[h]
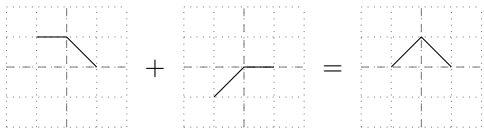

  \centering
  \poldivdpsum
  \caption{Summing up all but one of the functions: $\Psi_1 + \Psi_2$}
 \label{fig:dpsum}
\end{figure}

\begin{figure}[h]
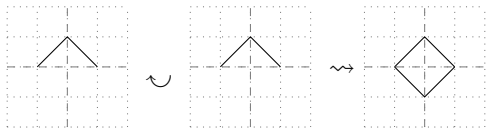

  \centering
  \poldivdpcombine
\caption{Combining the sum with the remaining function gives the polytope $\Delta_3$ of the toric special fiber.}
\label{fig:dpglue}
\end{figure}
The Donaldson-Futaki invariant for this degeneration is just the projection of the barycenter of $\Delta_j$ to the second component of $M_\RR \times \RR$. In our example obviously the barycenter of $\Delta_3$ is $(0,0)$. Hence, the Donaldson-Futaki invariant vanishes and the corresponding del Pezzo surface is not K-stable.

We organise the paper as follows. In Section~\ref{sec:kstab} we recall the definition of (equivariant) K-stability for the case of Fano varieties $X$ and pairs $(X,\xi)$, were $\xi$ is a vector field. In Section~\ref{sec:tvars} we recapitulate the combinatorial description of $T$-varieties of complexity one from \cite{t-survey}. In Section~\ref{sec:test-configs} we describe the equivariant degenerations in terms of the combinatorial data and calculate the corresponding Donaldson-Futaki invariant. In Section~\ref{sec:addit-symmetries} we consider the action of the normalizer of the torus in $\Aut(X)$ in order to further reduce the number
of degenerations to consider. In certain cases we are left with only the trivial degenerations. In this way, we obtain the existence of non-trivial K\"ahler-Ricci solitons for some Fano threefolds:
\setcounter{footnote}{0}
\begin{ttheorem}[See Theorem~\ref{thm:soliton}]
  The Fano threefolds 3.8\savefootnote{one-of-family3}{This refers only to a particular element of the family admitting a 2-torus action}, 3.21, 4.5\repeatfootnote{one-of-family3} from Mori-Mukai's list \cite{mori-mukai} admit a non-trivial K\"ahler-Ricci soliton.
\end{ttheorem}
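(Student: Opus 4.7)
The plan is to apply the equivariant modified K-stability criterion of Datar--Sz\'ekelyhidi for Kähler-Ricci solitons to the three Fano threefolds in question, exactly as the analogous Theorem on K\"ahler--Einstein metrics was treated in Section~\ref{sec:test-configs}, but now including a soliton vector field $\xi$ and using the normalizer of $T$ in $\Aut(X)$ to cut down the set of degenerations to the trivial ones. Concretely, for each of 3.8, 3.21 and 4.5 I would begin by extracting from the classification in \cite{t3folds} the combinatorial data $(\Box,\Psi_1,\dots,\Psi_r)$ describing the polarized complexity-one $T$-threefold, so that the machinery developed in Section~\ref{sec:test-configs} produces the finite list of candidate equivariant degenerations $\Delta_j$, one for each distinguished function $\Psi_j$, together with explicit formulae for their (modified) Donaldson--Futaki invariants as integrals over $\Delta_j$ of linear functions twisted by $e^{\langle\xi,\cdot\rangle}$.

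Next I would compute the action of the normalizer $N_{\Aut(X)}(T)/T$ on the combinatorial data, following the method of Section~\ref{sec:addit-symmetries}. The key point is that this finite group acts on the index set of the $\Psi_j$ (by permutations combined with lattice automorphisms of $\Box$ and of $M_\RR$), and degenerations lying in the same orbit have equal modified Futaki invariants. The hope, suggested by the phrasing of Section~\ref{sec:addit-symmetries}, is that for each of 3.8, 3.21 and 4.5 the normalizer acts transitively enough on the nontrivial $\Psi_j$ to identify all candidate non-trivial degenerations with one another in a way that forces the associated polytope $\Delta_j$ to have a symmetry sending the projection functional to its negative; this automatically makes the corresponding modified Futaki invariant vanish, so none of these can destabilize in the strict sense, leaving only the trivial (product) degenerations to examine.

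It then remains to handle the trivial degenerations, i.e.\ the one-parameter subgroups of $T$ itself. Here the modified Futaki invariant becomes a linear functional $\mathrm{Fut}_\xi\colon N_\QQ\to\RR$ on the cocharacter lattice, and the criterion of Tian--Zhu--Datar--Sz\'ekelyhidi requires one to find the unique $\xi$ (in the Lie algebra of the fixed torus, commuting with any residual reductive symmetries) that makes this functional vanish identically. For each of the three examples this reduces to solving a concrete transcendental equation coming from the barycenter-with-weight $\int_{\Delta} m\, e^{\langle\xi,m\rangle}\,dm = 0$ on the moment polytope of the ambient toric degeneration; the symmetries fixed above confine $\xi$ to a one-dimensional subspace, so the equation is effectively one-dimensional and has a unique solution by convexity of the relevant functional. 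Producing this $\xi$ certifies a Kähler--Ricci soliton.

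The main obstacle will be the second step: verifying that the normalizer action is rich enough to collapse every non-trivial equivariant degeneration, either to a symmetric one with vanishing modified Futaki invariant or to the trivial class. For the K\"ahler--Einstein theorem earlier in the paper the analogous reduction was achieved on a case-by-case basis, and for 3.8, 3.21 and 4.5 the explicit polytope-and-divisor data from \cite{t3folds} will have to be inspected carefully in order to identify the relevant involutions and permutations of the $\Psi_i$; once that bookkeeping is done, the soliton step is a routine one-dimensional convex analysis problem.
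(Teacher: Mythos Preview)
Your proposal misidentifies the mechanism by which the normalizer $G=\Aut_T(X)$ eliminates test configurations. You plan to show that the modified Futaki invariants of the non-trivial degenerations \emph{vanish} by symmetry, but vanishing $\DF_\xi$ on a non-trivial special test configuration means that $(X,\xi)$ is \emph{not} equivariantly K-stable---the definition requires strict positivity except on product configurations. So this route, as written, would establish the opposite of what you want.

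The paper's argument (Theorem~\ref{thm:soliton-criterion}) is structurally different and simpler. Two constraints together eliminate all non-trivial $G$-equivariant special test configurations outright: (a) for $\X(Q,v,m)$ to be $G$-equivariant, the point $Q$ must be fixed by the induced $G$-action on $\PP^1$; and (b) for the special fiber to be normal, Corollary~\ref{cor:normal-special-fiber} requires that at most one $P\neq Q$ have $\Psi_P(0)\notin\ZZ$. For 3.8 there are three such points, so normality fails for \emph{every} $Q$ and there are no non-trivial special $T$-equivariant test configurations at all. For 3.21 and 4.5 there are exactly two such points, swapped by an element of $G$; hence neither is $G$-fixed, any $G$-fixed $Q$ avoids both, and again normality fails. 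With no non-trivial configurations left to test, one only needs Tian--Zhu's unique $\xi\in N_\RR$ annihilating $F_{X,\xi}$ to handle the product configurations. Your plan omits the normality constraint entirely, and uses the symmetry to relate Futaki invariants rather than to rule out the admissible choices of $Q$.
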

\noindent To our knowledge, these are the only known examples of non-trivial K\"ahler-Ricci solitons which are neither toric nor the solution of an ODE.

\section{Equivariant K-stability}
\label{sec:kstab}
The key objects involved in defining K-stability are test configurations: 
\begin{definition}
Let $(X,L)$ be a polarized projective variety.
  A \emph{test configuration} for $(X,L)$ is a $\CC^*$-equivariant flat family $\X$ over $\A^1$ equipped with a relatively ample equivariant $\QQ$-line bundle $\L$ such that 
\begin{enumerate}
\item The $\CC^*$-action $\lambda$ on $(\X,\L)$ lifts the standard $\CC^*$-action on $\A^1$;
\item The general fiber is isomorphic to $X$, with $\L$ restricting to $L$.
\end{enumerate}
A test configuration with $\X \cong X \times \A^1$ is called a \emph{trivial configuration}. A test configuration with normal special fiber $\X_0$ is called \emph{special}. 
Given an algebraic group $G$ with action on $X$, a test configuration is called \emph{$G$-equivariant} if the action extends to $(\X,\L)$ and commutes with the $\CC^*$-action of the test configuration.
\end{definition}
\begin{remark}
The $\CC^*$-action guarantees that the family $\X$ is trivial over $\A^1\setminus\{0\}$.
\end{remark}
\begin{remark}
 By Hironaka's Lemma \cite[III.9.12]{MR0463157} the normality of the fibers induces the normality of the total space of the family. Hence, a special test configuration $(\X,\L)$ has normal total space $\X$.
\end{remark}

We will primarily be concentrating on the situation where $X$ is Fano and $L=\CO(- K_X)$. It follows that the special fiber $\X_0$ is $\QQ$-Fano. 
 We proceed to define the (modified) Donaldson-Futaki invariants as they appeared in \cite{berman14}.

Let $\X_0$ be any $\QQ$-Fano variety equipped with the action of an algebraic torus $T'$, and let $\ell\in \NN$ be the smallest natural number such that $-\ell K_{\X_0}$ is Cartier. Let $M'$ and $N'$ be the lattices of characters and one-parameter subgroups of $T'$, respectively. We denote the associated $\QQ$- and $\RR$-vector spaces by $M'_\QQ$, $M'_\RR$, etc. Fix an element $\xi\in N'_\RR$.
 Consider the canonical linearization for $\L_0 = \CO(-\ell K_{\X_0})$ coming from the identification $\L_0 \cong (\bigwedge^{\dim X} \T_{\X_0})^{\otimes \ell}$. Then we set $l_k=\dim H^0(X,\L_0^{\otimes k})$ and for every $v \in N'$
\[w_k(v)= \sum_{u \in M'} \langle u,v\rangle \cdot e^{\nicefrac{\langle u, \xi \rangle}{k}}\cdot \dim H^0(X,\L_0^{\otimes k})_{u}.\]
Now, 
\begin{equation}
  \label{eq:3}
  F_{\X_0,\xi}(v):=-\lim_{k \to \infty} \frac{w_k(v)}{k\cdot l_k\cdot \ell},
\end{equation}
defines a linear form on $N_\RR'$, i.e. an element of $M_\RR'$. We set $F_{\X_0} :=  F_{\X_0,0}$ for the special case $\xi=0$.

Consider now a Fano variety $X$ with a (possibly trivial) torus $T$ acting on it. Fix  $\xi \in N_\RR$, where $N$ is the lattice of one-parameter subgroups of $T$. Let $\X_0$ be the central fiber of a special $T$-equivariant test configuration $\X$ for $(X,\CO(-K_X))$. Then $\X_0$ comes equipped with a $T'=T\times \CC^*$-action induced by the test configuration. Let $v\in N'$ denote the one-parameter subgroup corresponding to the $\CC^*$-action of the test configuration. Note that the inclusion $T\hookrightarrow T'$ induces an inclusion of one-parameter subgroups $N\hookrightarrow N'$.

\begin{definition}[cf. \cite{berman14}]
 Given a test configuration $(\X,\L)$ of $(X,\CO(-K_X))$ as above,
  its \emph{Donaldson-Futaki invariant} is defined as \[\DF(\X, \L)=F_{\X_0}(v).\] Its \emph{modified Donaldson-Futaki invariant} is defined as \[\DF_\xi(\X,\L)=F_{\X_0,\xi}(v).\]
\end{definition}

\begin{definition}
  Consider a Fano variety $X$ with action by a reductive group $G$ containing a maximal torus $T \subset G$ and $\xi \in N_\RR$. The pair $(X,\xi)$ is called \emph{equivariantly K-stable} if $\DF_\xi(\X,\L) \geq 0$ for every $G$-equivariant special test configuration $\X$ as above and we have equality exactly in the case of trivial configurations. If $\xi=0$, we simply say that $X$ is equivariantly K-stable.
\end{definition}

The following result by Gabor Sz\'ekelyhidi and Ved Datar motivates the study of \emph{equivariant} K-stability:
\begin{theorem}[{\cite{datar15}}]
\label{thm:gabor}
For a Fano $G$-variety $X$, the variety  $X$ admits a K\"ahler-Ricci soliton if and only if there is an $\xi\in N_\RR$ such that the pair $(X,\xi)$ is equivariantly K-stable. In particular, $X$ is K\"ahler-Einstein if and only if $X$ is equivariantly K-stable.
\end{theorem}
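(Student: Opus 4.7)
The plan is to follow the continuity-method approach to the Yau-Tian-Donaldson conjecture, adapted to the Kähler-Ricci soliton setting and performed equivariantly for the action of $G$. The Kähler-Einstein statement is the specialization $\xi = 0$, so it suffices to prove the soliton version. The easy implication---existence of a soliton implies equivariant K-stability---follows a variant of Berman's argument: one identifies $F_{\X_0,\xi}(v)$, up to a positive constant, with the asymptotic slope of the modified Mabuchi functional along the geodesic ray associated to a $G$-equivariant special test configuration. Existence of a Kähler-Ricci soliton yields properness of this functional on the space of $G$-invariant Kähler potentials, which forces $\DF_\xi(\X,\L) \geq 0$ with equality precisely for trivial configurations.

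For the substantive direction, assume $(X,\xi)$ is equivariantly K-stable, and run the modified Aubin continuity path
\[
\mathrm{Ric}(\omega_t) - L_\xi \omega_t = t\,\omega_t + (1-t)\,\omega_0,
\]
starting from a $G$-invariant reference form $\omega_0 \in c_1(X)$. Since both $\omega_0$ and the equation are $G$-invariant, the solutions $\omega_t$ can be chosen $G$-invariant throughout. Standard openness shows that $\{t \in [0,1] : \text{a solution exists}\}$ is open, so it suffices to argue that $T := \sup\{t : \text{solution exists}\} = 1$, in which case $\omega_1$ is the desired soliton. I would proceed by contradiction and assume $T < 1$.

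Pick a sequence $t_i \to T$ and extract a Gromov-Hausdorff limit. By the compactness theory for Kähler metrics with bounded Ricci curvature, refined in the algebro-geometric setting by Donaldson-Sun and extended to the soliton context, the limit $X_\infty$ is a $\QQ$-Fano variety admitting a weak Kähler-Ricci soliton, together with a natural $\CC^*$-action. $G$-invariance of the $\omega_{t_i}$ forces the convergence to be $G$-equivariant, so $G$ acts on $X_\infty$ and commutes with the limiting $\CC^*$-action. Algebraizing the picture produces a two-step special degeneration from $X$ to $X_\infty$ whose combined modified Donaldson-Futaki invariant is $\leq 0$, giving the desired contradiction. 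The main obstacle is establishing full $G$-equivariance of this algebraic test configuration: the one-parameter subgroup extracted from the analytic limit must be shown to commute with the $G$-action. This is achieved by working inside the $G$-fixed locus of an appropriate Hilbert scheme---available because the pluri-anticanonical embedding $X \hookrightarrow \PP^N$ can be chosen $G$-equivariant---and applying a Luna slice-type argument so that the destabilizing subgroup is produced within the centralizer of $G$, yielding a genuinely $G$-equivariant special test configuration.
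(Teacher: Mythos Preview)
The paper does not prove this theorem at all: it is stated with the citation \cite{datar15} and used as a black box. There is no argument in the paper to compare your attempt against; the authors simply import the result of Datar and Sz\'ekelyhidi and then work out the combinatorics of equivariant test configurations in the complexity-one setting.

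Your sketch is a reasonable high-level outline of the strategy in the Datar--Sz\'ekelyhidi reference itself (smooth continuity method, Gromov--Hausdorff/Donaldson--Sun partial $C^0$ limits, two-step degeneration, and a Luna-slice argument inside the Hilbert scheme to force the destabilizing one-parameter subgroup to commute with $G$). A couple of points deserve more care if you intend this as a genuine proof rather than a summary: the ``easy'' direction does not proceed via properness of the modified Mabuchi functional but rather via a direct slope computation in the spirit of Berman (properness is in fact what one is trying to obtain in the hard direction); and the assertion that the Gromov--Hausdorff convergence is automatically $G$-equivariant, and that this equivariance then transfers to the algebraic test configuration, is exactly the delicate point of the paper you are citing---your Luna-slice remark gestures at the right idea but is doing a lot of work that would need to be spelled out. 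For the purposes of the present paper, however, none of this is required: the theorem is invoked, not proved.
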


\section{Polyhedral divisors and divisorial polytopes}
\label{sec:tvars}
In this section, we recall notions related to \emph{polyhedral divisors} \cite{affine-tvar} and \emph{divisorial polytopes} \cite{proj-tvar} which are useful for studying varieties with torus action.  See also \cite{t-survey} for an introduction to these concepts. 

\subsection{Polyhedral Divisors}\label{sec:pdiv}
Let $Y$ be a normal semiprojective variety and let $\sigma\subset N_\RR$ be a pointed, rational polyhedral cone. By $\sigma^\vee$ we denote the dual cone of $\sigma$.
 \begin{definition}A \emph{polyhedral divisor} on $(Y,N)$ with tail cone $\sigma$ is a 
formal finite sum
$$\D = \sum_P \D_P \otimes P,$$
where $P$ runs over all prime divisors on $Y$ and $\D_P$ is a rational polyhedron with tailcone $\sigma$.
Here, finite means that only finitely many coefficient differ from the 
tail cone. 
If the lattice $N$ is clear, we often speak simly of polyhedral divisors on $Y$.
If $Y$ is a complete curve, we define the \emph{degree} of a polyhedral divisor by $$\deg \D := \sum_P \Delta_P $$ where summation is via Minkowski addition. 
\end{definition}
Each polyhedron $\D_P$ gives rise to a map
\begin{align*}
\D_P:M\cap \sigma^\vee &\to \QQ\\
u&\mapsto \D_P(u):=\min_{v\in\D_P} \langle v,u\rangle.
\end{align*}
We can evaluate a polyhedral divisor for every element $u \in \sigma^\vee \cap M$ via
$$\D(u):=\sum_P \D_P(u)\cdot  P$$
to get a $\QQ$-divisor on $Y$.

A polyhedral divisor $\D$ is called a \emph{p-divisor} if $\D(u)$ is $\QQ$-Cartier and semiample for every $u\in \sigma^\vee\cap M$, and for some such $u$ the evaluation $\D(u)$ is big, that is, some multiple has a section with affine complement. In this case, the ring
\[
\bigoplus_{u\in\sigma^\vee \cap M} H^0(Y,\CO(\lfloor \D(u)\rfloor))
\]
is finitely generated and its spectrum $\TV(\D)$ is a normal affine of dimension $\dim Y+\rk N$ \cite[Theorem 3.1]{affine-tvar}. The  grading induces an effective action on $\TV(\D)$ by the torus $T$ whose character lattice is $M$.
Conversely, every normal affine $T$-variety is equivariantly isomorphic to $\TV(\D)$ for some p-divisor $\D$ on a semiprojective $Y$ \cite[Theorem 3.4]{affine-tvar}. Note, that by construction the rational functions $\CC(Y)$ equal the invariant rational functions $\CC(X)^T$ and this inclusion defines a rational quotient map $\pi:X \dashrightarrow Y$.

We will be especially interested in rational affine $T$-varieties $X$ with a complexity-one torus action having only constant invariant global functions. Such an $X$ may always be described by a p-divisor on $\PP^1$. On the other hand, the criterion for checking that a polyhedral divisor on $\PP^1$ is a p-divisor is especially simple: $\D$ is a p-divisor if and only if $\deg \D$ is properly contained in $\sigma$ \cite[Example 2.12]{affine-tvar}.

\subsection{Invariant Divisors}
Let $\D$ be a p-divisor on $Y$. Torus invariant divisors on $X=\TV(\D)$ have been studied in \cite{petersen}. Prime invariant divisors may be classified into two types: \emph{vertical divisors}, which arise as closures of a union of $k$-dimensional orbits, and \emph{horizontal divisors}, which arise as closures of a union of $k-1$-dimensional orbits. Here, $k$ denotes the dimension of the torus $T$.

We will primarily be interested in the case when $Y$ is a smooth projective curve. In this case, vertical divisors are in bijection with pairs $(P,v)$ where $P\in Y$ and $v$ is a vertex of $\D_P$; we denote the corresponding divisor by $D_{P,v}$. Likewise, horizontal divisors are in bijection with so-called \emph{extremal rays}: rays $\rho$ of $\sigma$ such that $\sigma\cap \deg \D=\emptyset$. We denote the corresponding divisor by $D_\rho$.

The invariant principal divisor associated to a semi-invariant function $f\cdot \chi^u$, $f\in \CC(Y)$ is calculated as follows (cf. \cite[Proposition 3.13]{petersen}):
\begin{equation}
  \label{eq:principal}
  \Div(f\cdot \chi^u)=\sum_\rho \rho(u)\cdot D_\rho+\sum_{P,v} \mu(v)(\langle u,v\rangle +\ord_P f)\cdot D_{P,v}.
\end{equation}

Here, $\rho(u)$ denotes the value of $u$ on the primitive generator of $\rho$, and $\mu(v)$ denotes the smallest natural number such that $\mu(v)v\in N$.

Moreover, a canonical divisor on $X$ is given by
\begin{equation}
  \label{eq:canonical}
  -\sum_\rho D_\rho + \sum_{P,v} (1-\mu(v))\cdot D_{P,v}  + \sum_{P,v} \mu(v) a_P  \cdot D_{P,v},
\end{equation}
where $K_{\PP^1} = \sum_P a_P \cdot P$ is a canonical divisor on $\PP^1$.
\subsection{Divisorial Polytopes}
\label{sec:divisorial-polytopes}
Just as polarized toric varieties correspond to lattice polytopes, polarized rational complexity-one $T$-varieties correspond to so-called divisorial polytopes:
\begin{definition}\cite[\S 3]{proj-tvar}
\label{def:divpol}
Let $\Box\subset M_\RR$ be a lattice polytope. 
  A \emph{divisorial polytope} over $\Box$ is a continuous piecewise affine concave function 
\[\Psi=\sum_{P\in \PP^1} \Psi_P\otimes P: \Box\cap M_\QQ \rightarrow \wdiv_\QQ \PP^1,\] such that:
\begin{enumerate}
\item For every $u$ in the interior of $\Box$, $\deg \Psi(u) > 0$;
\item For $P\in \PP^1$, the graph of $\Psi_P$ has integral vertices. 
\end{enumerate}
The \emph{support} of a divisorial polytope is the set of points $P \in \PP^1$ where $\Psi_P \not \equiv 0$. We will denote it by $\supp \Psi$.
\end{definition}
The polarized $T$-variety $(X,L)$ corresponding to $\Psi$ may be recovered via the equality
\[
H^0(X,L^{\otimes k})=\bigoplus_{u\in \Box\cap \frac{1}{k}M} H^0(Y,\CO(\lfloor k\cdot \Psi(u)\rfloor)).
\]
On the other hand, the affine cone
\[
\spec \bigoplus_i H^0(X,L^{\otimes i})
\]
corresponds to a p-divisor $\D$ with respect to the lattice $N\times \ZZ$ as described above. Indeed, by \cite[Prop. 2.11]{affine-tvar} the mapping
\[(u,k) \mapsto \D((u,k))= k\cdot \Psi(\nicefrac{1}{k} \cdot u)\]
defines a p-divisor $\D$ with tail cone given by 
\[
(\tail \D)^\vee = \QQ_{\geq 0} \cdot (\Box \times \{1\}).
\]
We recover $\Psi$ from $\D$ as 
\begin{equation}
   \label{eq:dualizing}
   \Box = \{u \mid (u,1) \in (\tail \D)^\vee\}; \qquad \Psi(u) = \D((u,1)).
 \end{equation}

\begin{definition}
\label{def:fano-polytope}
A divisorial polytope $\Psi$ is called \emph{Fano} if there is  an integral divisor $K_{\PP^1}=\sum_P a_P \cdot P$ of degree $-2$, such that 
\begin{enumerate}
  \item $0 \in \Box^\circ$, in particular $\deg \Psi(0) >0$;\label{item:0-interior}
  \item $\Psi_P(0) + a_P + 1 > 0$ and the facets of the graph of $(\Psi_P + a_P + 1)$ have lattice distance $1$ from the origin\label{item:distance-vertical};
  \item every facet $F$ of $\Box$ with $(\deg \Psi)|_F \not \equiv 0$ has lattice distance $1$ from the origin. \label{item:lattice-distance-horizontal}
\end{enumerate}
\end{definition}

\begin{example}
\label{exp:del-pezzo}
 We consider the Fano complete intersection in $\PP^4$ cut out by the equations
\[x_2^2 -x_2x_0+x_3x_4=0, \quad x_2^2-x_0x_1=0.\]
This is a del Pezzo surface of degree $4$ with three $A_1$ singularities. It admits a $\CC^*$-action induced by the weights $(0,0,0,1,-1)$. The corresponding Fano divisorial polytope lives over $\Box=[-1,1]$ and it is given by 
  \begin{equation*}
    \Psi_{0}(u) =\min\{0,-u\}+1, \quad \Psi_{1}(u) =\min\{0,u\}, \quad  \Psi_{\infty}(u) =\min\{u,-u\}+1
  \end{equation*}
   The corresponding graphs can be seen from Figure~\ref{fig:div-pol}.
  \begin{figure}[h]
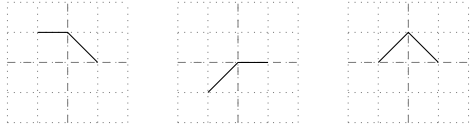

    \centering
\poldivdp
\caption{$\Psi_0$, $\Psi_1$ and $\Psi_\infty$.}
\label{fig:div-pol}
\end{figure}
One checks that $\Psi$ fulfills the Fano condition from Definition~\ref{def:fano-polytope} by choosing $K_{\PP^1} = -[0]-[\infty]$.
\end{example}

\begin{theorem}
 \label{thm:fano-divpols}
 Fano divisorial polytopes correspond to canonical Gorenstein Fano T-varieties.
\end{theorem}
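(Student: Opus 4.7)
The strategy is to verify that the three conditions in Definition~\ref{def:fano-polytope} translate, respectively, to the polarization $L$ agreeing with $-K_X$, Cartier-ness of $-K_X$ (the Gorenstein condition), and canonicity of the singularities of $X$. Since Section~\ref{sec:divisorial-polytopes} already provides a bijection between divisorial polytopes and polarized rational complexity-one $T$-varieties $(X,L)$, the remaining task is to match the Fano conditions on $\Psi$ with these three geometric properties of $(X,-K_X)$. The plan is to compute $-K_X$ from formula (\ref{eq:canonical}) and to extract the invariant-divisor form of the polarization from $\Psi$ via (\ref{eq:principal}), then compare.

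Concretely, using (\ref{eq:principal}) one writes $L \sim \sum_\rho a_\rho D_\rho + \sum_{P,v} b_{P,v} D_{P,v}$, where $a_\rho$ measures the lattice distance from the origin to the facet of $\Box$ with inner normal $\rho$, and $b_{P,v}$ encodes the lattice distance from the origin to the facet of the graph of $\Psi_P$ passing above the vertex $v$. This is obtained by applying $\Div(f\cdot \chi^0)$ for suitable rational functions $f$ on $\PP^1$, made possible by condition (i) ($0\in \Box^\circ$). Comparing against the rewriting $-K_X = \sum_\rho D_\rho + \sum_{P,v}(\mu(v)(1-a_P)-1)\,D_{P,v}$ of (\ref{eq:canonical}), the horizontal coefficients agree precisely when every facet $F$ of $\Box$ with $(\deg \Psi)|_F \not\equiv 0$ has lattice distance $1$ from the origin---this is condition (iii)---while the vertical coefficients agree precisely when each facet of the graph of $\Psi_P + a_P + 1$ has lattice distance $1$ from the origin, which is the substance of condition (ii). The positivity $\Psi_P(0) + a_P + 1 > 0$ in (ii), together with (i), guarantees that the origin sits strictly inside the anticanonical polytope, yielding ampleness of $-K_X$ rather than mere numerical equivalence to an effective divisor.

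For Gorenstein-ness, the lattice-distance-one condition on facets is exactly what forces $-K_X$ to be Cartier rather than merely $\QQ$-Cartier, since any larger distance would introduce a non-trivial Cartier index. For canonicity, I would invoke the discrepancy criterion for complexity-one $T$-varieties from \cite{petersen}: discrepancies at exceptional divisors of a toric-style resolution are controlled by the heights of lattice points above the ``anticanonical'' section of the polyhedral data, and the lattice-distance-one conditions make these heights non-negative. For the converse direction, one starts with a canonical Gorenstein Fano complexity-one $T$-variety, takes the divisorial polytope $\Psi$ associated to the polarization $-K_X$ (which is Cartier and ample by hypothesis), and reverses each step to conclude that $\Psi$ satisfies (i)--(iii).

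The main obstacle is the careful bookkeeping in the coefficient comparison, in particular relating the lattice index $\mu(v)$ of a vertex of $\D_P$ to the geometric lattice distance from the origin to a facet of the graph of $\Psi_P + a_P + 1$. The identification $\D((u,1)) = \Psi(u)$ from (\ref{eq:dualizing}) provides the bridge, but one must track all facets meeting at a given vertex and the signs introduced by the concavity convention. Once this correspondence is set up, the theorem follows as a direct verification that (i)--(iii) encode exactly ampleness, Cartier-integrality, and non-negative discrepancies for $-K_X$.
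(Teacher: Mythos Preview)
Your plan---compute $-K_X$ and the polarization $L$ as $T$-invariant divisors on the projective variety $X$ and compare coefficients---is a legitimate strategy, but it is \emph{not} the paper's route, and your canonicity step is where it breaks down. The paper never works on $X$. Instead it passes to the affine cone $\spec S$ with $S=\bigoplus_k H^0(X,L^{\otimes k})$, described by the p-divisor $\D$ on the enlarged lattice $N'=N\times\ZZ$, and invokes Proposition~\ref{prop:schwede-smith} (Schwede--Smith): $(X,L)$ is canonical Gorenstein Fano with $L\cong -K_X$ if and only if $\spec S$ is Gorenstein and canonical but no proper Veronese subring is. On the cone, \eqref{eq:principal} and \eqref{eq:canonical} apply verbatim; Gorenstein becomes ``$K_{\spec S}$ is principal'', which unwinds to \eqref{eq:gor-1}--\eqref{eq:gor-2} and hence to the lattice-distance-$1$ clauses in (ii) and (iii). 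The Veronese clause forces $c=-1$ (so $L=-K_X$, not a power). Canonicity of the cone is then checked via the discrepancy formula $\discr_\rho=-1-\rho(u')$ from \cite{tsing} for the non-extremal rays, and the requirement $\discr_\rho>-1$ is exactly $\rho(0,1)>0$ for all rays, i.e.\ $0\in\Box^\circ$. Thus in the paper's decomposition, condition~(i) is what delivers canonicity (log-terminal plus Gorenstein), while the lattice-distance parts of (ii)--(iii) deliver Gorenstein.

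Your attribution is therefore misaligned: the sentence ``the lattice-distance-one conditions make these heights non-negative'' for canonicity is not how the argument runs, and your discrepancy sketch (citing \cite{petersen} rather than \cite{tsing}) is too vague to carry the claim. A direct approach on $X$ could in principle be completed, but it requires Cartier and discrepancy criteria for the \emph{projective} complexity-one $T$-variety---machinery not developed in this paper---and the coefficient comparison you outline is substantially heavier than the single principal-divisor equation one gets on the cone. The Schwede--Smith reduction is the key lemma your plan is missing.
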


The theorem follows from \cite[Section~3]{t3folds} by using the calculation of discrepancies from \cite[Section~4]{tsing}. For the convenience of the reader we give a more direct proof using the polyhedral divisor $\D$ for the section ring and the following Proposition by Schwede and Smith:
\begin{proposition}
\label{prop:schwede-smith}
  Let $(X,L)$ be a polarized variety. Then $X$ is Gorenstein, canonical and Fano and $L^{\otimes \ell} \cong \CO(-K_X)$ for some $\ell \in \NN$ if and only if the corresponding section ring is Gorenstein and canonical.

  In particular, $X$ is Fano and $L\cong \CO(-K_X)$ if and only if the section ring is Gorenstein and canonical, but none of its proper Veronese subrings are.
\end{proposition}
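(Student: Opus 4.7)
The plan is to study the affine cone $C := \spec R$ over the polarized pair $(X,L)$ equipped with its natural $\CC^*$-action, and compare its singularities to those of $X$ via a partial resolution at the vertex. Let $o \in C$ be the unique $\CC^*$-fixed point. Away from $o$, the good quotient $C \setminus \{o\} \to X$ realizes $C \setminus \{o\}$ as a Seifert $\CC^*$-bundle over $X$---a principal $\CC^*$-bundle if $L$ is primitive in $\pic X$, and a cyclic quotient thereof otherwise. Since both Gorenstein-ness and canonicity descend and ascend under such quotients, these properties match between $X$ and $C \setminus \{o\}$, so the entire question reduces to analyzing the vertex $o$.

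For the Gorenstein equivalence I would invoke Watanabe's classical theorem on graded rings, which gives that $R$ is Gorenstein iff $X$ is Gorenstein and $\omega_X \cong L^{\otimes(-\ell)}$ for some $\ell \in \ZZ$. To handle canonicity at $o$, I would blow up the vertex to obtain a partial resolution $\pi \colon \widetilde C \to C$, where $\widetilde C$ is the total space of $L^{-1}$ (modulo the same cyclic quotient), with exceptional divisor $E \cong X$ and normal bundle $N_{E/\widetilde C} \cong L^{-1}$. Adjunction gives $K_{\widetilde C}|_E = K_X - E|_E = K_X + L = (1-\ell)L$, while the Gorenstein hypothesis makes $\pi^* K_C|_E$ trivial, so the discrepancy in $K_{\widetilde C} = \pi^* K_C + aE$ must satisfy $-aL = (1-\ell)L$, i.e.\ $a = \ell - 1$. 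Canonicity of $C$ at $o$ is then equivalent to $\ell \geq 1$, which combined with the ampleness of $L$ is precisely the Fano condition $-K_X = \ell L$ together with $L^{\otimes \ell} \cong \CO(-K_X)$.

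For the ``in particular'' statement, I would observe that the $d$-th Veronese subring $R^{(d)} = \bigoplus_k H^0(X, L^{\otimes dk})$ is exactly the section ring of the pair $(X, L^{\otimes d})$. Applying the main equivalence to this pair, $R^{(d)}$ is Gorenstein and canonical exactly when $d$ divides $\ell$. Requiring no \emph{proper} Veronese ($d > 1$) to satisfy this therefore forces $\ell = 1$, which is the statement $L \cong \CO(-K_X)$.

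The main obstacle is the rigorous justification that canonicity of $C$ at the vertex is certified by the single discrepancy $a = \ell - 1$ computed on the blow-up, rather than requiring control over every discrepancy of a full resolution. This reduction works because the smooth line-bundle morphism $\widetilde C \to X$ transfers canonical singularities from $X$ to $\widetilde C$, so any additional exceptional divisors arising from a further resolution of $\widetilde C$ have discrepancies inherited from those on a resolution of $X$. Establishing this equivalence in full generality is exactly the technical content of the cited Schwede-Smith result, whose proof uses reduction modulo $p$ and the theory of globally F-regular / F-rational singularities; for the complete argument I would simply defer to that reference.
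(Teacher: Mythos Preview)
Your sketch is correct and in fact considerably more informative than the paper's own proof, which consists of a single sentence: ``This is the essence of (the proofs of) \cite[Prop.~6.4]{zbMATH01700888} and \cite[5.4]{zbMATH05712547}.'' You have essentially unpacked what lies behind those citations: Watanabe's criterion for the Gorenstein property of a graded ring, the contraction $\widetilde C \to C$ from the total space of $L^{-1}$ to the affine cone, and the adjunction computation yielding the discrepancy $\ell-1$ for the exceptional copy of $X$. The Veronese argument for the ``in particular'' clause is also exactly right.

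One small correction to your final paragraph: the reduction you flag as the ``main obstacle''---that canonicity of $C$ at the vertex is controlled by the single discrepancy $\ell-1$ once $\widetilde C$ is known to be canonical---does \emph{not} require F-singularity techniques or reduction modulo $p$. It is pure characteristic-zero birational geometry: since $\widetilde C \to X$ is a smooth morphism (a line bundle), $\widetilde C$ is canonical if and only if $X$ is, and then the standard composition formula for discrepancies shows that every exceptional divisor over $C$ has discrepancy at least $\min(\ell-1,\,0)$ plus a nonnegative contribution from $\widetilde C$. The Schwede--Smith reference is cited because it packages the statement conveniently, not because its F-regularity machinery is needed here. So your own argument already suffices without deferring to that technology.
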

\begin{proof}
  This is the essence of (the proofs of) \cite[Prop. 6.4]{zbMATH01700888} and \cite[5.4]{zbMATH05712547}.
\end{proof}

\begin{proof}[Proof of Theorem~\ref{thm:fano-divpols}]
Consider the anti-canonical section ring $S$ for a Fano $T$-variety $X$ of complexity one. Then we obtain a p-divisor $\D$ over $\PP^1$ with lattice $N'=N \times \ZZ$ for the affine $T'$-variety $\spec S$, and a divisorial polytope $\Psi$ defined by (\ref{eq:dualizing}). We are going to prove that $\Psi$ fulfills the conditions of Definition~\ref{def:fano-polytope}.

Proposition~\ref{prop:schwede-smith} tells us that $\TV(\D)$ is Gorenstein. Because of the grading, this implies that a canonical divisor for $\TV(\D)$ is principal. Hence, for any $K_{\PP^1}=\sum_P a_P P$ being a canonical divisor on $\PP^1$, by \eqref{eq:canonical} there exists some $u'=(u,c)\in M\times\ZZ$, $f\in \CC(\PP^1)$, and such that
\[
\Div f\cdot \chi^{u'}=-\sum_\rho D_\rho + \sum_{P,v} (1-\mu(v))\cdot D_{P,v}  + \sum_{P,v} \mu(v) a_P  \cdot D_{P,v}.
\]
Using \eqref{eq:principal} and replacing $K_{\PP^1}$ by $K_{\PP^1}-\Div(f)$, we may assume that $f=1$. By \eqref{eq:principal}, we then have that
 for all extremal rays $\rho$ and all $P\in \PP^1$ and vertices $v\in \D_P$, 
\begin{align}
  \rho(u') &= -1\label{eq:gor-1}\\
  \mu(v)\langle u',v\rangle &=  -1+\mu(v) + \mu(v)a_P.\label{eq:gor-2}    
\end{align}
From this it is clear that the $|c|$-Veronese subring of $S$ is still Gorenstein, so we must have $c=-1$ by Proposition~\ref{prop:schwede-smith}.
Furthermore, after applying an automorphism of $M'=M\times \ZZ$ fixing the second factor, we may assume that $u'=(0,c)=(0,-1)$. This corresponds to a change of linearization of $L$.

Now, for every non-extremal ray $\rho$ in the tail cone (those which intersect $\deg \D$) we obtain discrepancies in a certain resolution of singularities for $\TV(\D)$ which are give by 
\[
   \discr_\rho=-1 -\rho(u'),
\]
see \cite[Section~4]{tsing}.
Since $S$ is canonical, we conclude that $\rho(u')=\rho(0,-1)<0$. Taking into account our condition on extremal rays above, we have $\rho(0,1)>0$ for all rays $\rho$, implying that $0$ is contained in the interior of $\Box = \{u \mid \forall_\rho:\rho(u,1)\geq 0\}$. Hence, condition (\ref{item:0-interior}) of Definition~\ref{def:fano-polytope} is fulfilled.

By \eqref{eq:dualizing} every facet $F$ of $\Box$ corresponds to a ray $\rho_F$ of $\tail \D$ such that 
\begin{equation}
  \label{eq:perp}
  \rho_F \perp \QQ_{\geq 0} (F \times \{1\}).
\end{equation}
Now, for $u \in F$ we have $\deg \Psi(u) = \deg \D(u,1) = \min \langle (u,1), \deg \D \rangle > 0$ if and only if $\rho_F$ does not intersect $\deg \D$. Hence, for $\deg \Psi(u)|_F \not \equiv 0$ the condition  \eqref{eq:gor-1} says that $\rho_F(0,-1)=-1$ and \eqref{eq:perp} is equivalent to the fact that $F$ has lattice distance $1$ from the origin. It follows that condition (\ref{item:lattice-distance-horizontal}) from Definition~\ref{def:fano-polytope} is fulfilled.

Again by \eqref{eq:dualizing} every facet $F$ of the graph $\Psi_P$ corresponds to a vertex $v_F'$ of $\D_P$. Indeed, for $(u,\Psi_P(u))\in F$ we have $\Psi(u) = \langle(u,1),v_F' \rangle$ for a unique vertex $v_F'$. Writing $v_F'=(v,h)$,  by \eqref{eq:gor-2} we obtain 
\[\-\mu(v_F)h=-1+\mu(v_F)(a_P+1) \]
and hence 
\[h=-\frac{\mu(v_F)(a_P+1)-1}{\mu(v_F)}.\]
We can thus write  $\mu(v_F)$ as $\ell \cdot \mu(v)$ for some $\ell\in\NN$. We obtain
\[\Psi(u) = \D((u,1)) = \langle (u,1),(v,h)\rangle = \langle u, v \rangle + \frac{1}{\ell \cdot \mu(v)} - a_P -1.\]
Note that the graph of $\langle \cdot , v \rangle + \frac{1}{\ell \cdot \mu(v)}$ has lattice distance $\nicefrac{1}{\ell}$ from the origin. But the integrality condition in Definition~\ref{def:divpol} implies that $\ell = 1$ and, hence, condition (\ref{item:distance-vertical}) of Definition~\ref{def:fano-polytope} is fulfilled and $\Psi$ is a Fano divisorial polytope. 

Conversely, if we start with a Fano divisorial polytope, it is straightforward to confirm the Gorenstein condition for the corresponding p-divisor by checking (\ref{eq:gor-1}) and $(\ref{eq:gor-2})$ with $u'=(0,-1)$. Since $0 \in \Box^\circ$,  $\tail \D = (\QQ_{\geq 0} (\Box \times \{1\}))^\vee$ is contained in $N_\QQ \times \QQ_{\geq 0}$. Now, for each ray $\rho$ of $\tail \D$, $\rho(0,-1)<0$, which implies $\discr_\rho=-1 -\rho(0,-1) > -1$, and hence $\TV(\D)$ is log-terminal. But since $\TV(\D)$ is also Gorenstein, this is the same thing as being canonical. 
\end{proof}

\begin{remark}\label{rem:psi0}
Let $\Psi$ be a Fano divisorial polytope such that $\Psi_P(0)\in\ZZ$ for all $P\in \PP^1$. By definition, we thus have that for some canonical divisor $\sum_P a_P\cdot P$, $\Psi_P(0)+a_P+1=1$ for all $P$. Indeed, since we always have
$\Psi_P(0)+a_P+1>0$, $\Psi_P(0)$ integral implies $\Psi_P(0)+a_P+1\geq 1$. But if it were strictly greater than $1$, any facet of the graph of $\Psi_P+a_P+1$ containing $(0,\Psi_P(0))$ would have lattice distance larger than one from the origin. 
We furthermore claim that the slopes of each $\Psi_P$ are all integral. Indeed, $\Psi_P+a_P+1$ is locally of the form
$u\mapsto \langle u,v \rangle + c$. By concavity, we have $c\geq 1$. Locally, the slope of $\Psi_P$ is given by $v$. Let $\mu$ be the smallest natural number such that $\mu\cdot v$ is in $N$. Then the height of the corresponding facet of the graph of $\Psi_P+a_P+1$ is $\mu c$. Hence, we must have $\mu=1$, and $\Psi_P$ has integral slope.
 \end{remark}

\subsection{Admissible Collections of Polyhedra} 
The following definition is will be important for determining the normality of the special fiber $\X_0$ of a special equivariant test configuration $(\X,\L)$.

\begin{definition}[{cf. \cite[Definition 2.2]{ilten:12a} }]
Let $I$ be a set, $\{ \Delta_i \}_{i\in I}$ a collection of rational polyhedra $\Delta_i\subset N_\RR$ with some common tail cone $\sigma\subset N_\RR$. This collection is \emph{admissible} if for all $u\in \sigma^\vee\cap M$, there is at most one $i$ for which $\Delta_i(u)$ is non-integral. 
Equivalently, this collection is admissible if for $u\in \sigma^\vee$ satisfying $\dim \face(\Delta_i,u)=0$ for all $i$, there is at most one $i$ for which $\face(\Delta_i,u)\notin N$.
\end{definition} 
\noindent Here, $\face(\Delta_i,u)$ denotes the face of $\Delta_i$ on which $u$ is minimal.

\section{Combinatorial description of test configurations}
\label{sec:test-configs}
\subsection{Combinatorial Construction}
\label{sec:comb-construction}
Let $(X,L)$ be a polarized complexity-one projective rational $T$-variety. We first describe a construction to produce certain test configurations for $(X,L)$. 
The section ring
\[
A=\bigoplus_i H^0(X,L^{\otimes i}).
\]
is $M'=\ZZ\times M$ graded, where $M$ is the character lattice of $T$, and hence may be described using a p-divisor $\D$ on $(\PP^1,N')$:
\[
A\cong \bigoplus_{u\in \sigma^\vee\cap M'} H^0(\PP^1,\CO(\lfloor \D(u)\rfloor)\cdot \chi^u.
\]
Here, $\sigma$ is the tailcone of $\D$, and $N'$ is the lattice dual to $M'$.

Choose a point $Q\in \PP^1$,   a natural number $m\in\NN$, and an element $v\in N'_\QQ$ such that $m\cdot v\in N'$. We define a polyhedral divisor $\widetilde \D$ with respect to the lattice $\widetilde{N}=N'\times \ZZ$:
\begin{align*}
%\widetilde \D=\left((v,1/m)+\widetilde{\sigma}\right)\otimes Q+\sum_{P\in\PP^1} \left( (\D_P,0)+\widetilde \sigma\right)\otimes P\\ %this was incorrect
\widetilde \D=\left(\conv\{\left((v,1/m)\right),(\D_Q,0)\}+\widetilde\sigma\right)\otimes Q+\sum_{\substack {P\in\PP^1\\P\neq Q}} \left( (\D_P,0)+\widetilde \sigma\right)\otimes P\\
\widetilde \sigma=\QQ_{\geq 0}\cdot \left\{(v,1/m)+\sum_{\substack {
P\in\PP^1\\P\neq Q}}(\D_P,0),\sum_{P\in\PP^1}(\D_P,0)\right\}.
\end{align*}
Set 
\[
\cA:=\bigoplus_{u\in \widetilde\sigma^\vee\cap \widetilde M} H^0(\PP^1,\CO(\lfloor \widetilde\D(u)\rfloor)\cdot \chi^u.
\]

\begin{proposition}\label{prop:construction}
The polyhedral divisor $\widetilde \D$ is proper, and 
\[
\X(Q,v,m):=\proj \cA
\]
is a $T$-equivariant test configuration for $X$. Here, the $\ZZ$-grading of $\cA$ used in $\proj$ is the first factor in $\widetilde M=\ZZ\times M\times \ZZ$, whereas the $\CC^*$-action of the test configuration corresponds to the one parameter subgroup generated by $(0,1)\in N'\times \ZZ$.
\end{proposition}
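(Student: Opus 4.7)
The plan is to verify three things in turn: (a) $\widetilde\D$ is a proper polyhedral divisor, so that $\SPEC \cA = \TV(\widetilde\D)$ is a normal affine variety; (b) the character $(0,0,1) \in \widetilde M$ yields a flat morphism $\X(Q,v,m) \to \A^1$; and (c) the general fiber over $\A^1 \setminus \{0\}$ is equivariantly isomorphic to $(X, L)$ with the claimed $\CC^*$-action of the test configuration.

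For (a), since $\PP^1$ is complete, by \cite[Example 2.12]{affine-tvar} it suffices to verify $\deg \widetilde\D \subsetneq \widetilde\sigma$. A Minkowski-sum computation gives
\[
\deg \widetilde\D = (v, 1/m) + (\deg \D, 0) + \widetilde\sigma.
\]
Containment in $\widetilde\sigma$ follows from $(v, 1/m) + \sum_{P \neq Q}(\D_P, 0)$ being one of the defining generators of $\widetilde\sigma$, together with a direct polyhedral argument that handles $(\D_Q, 0)$ via the tail structure; strict containment is inherited from $\deg \D \subsetneq \sigma$. For (b), the lattice element $(0, 0, 1)$ lies in $\widetilde\sigma^\vee$ because every vector in $\widetilde\sigma$ has nonnegative last coordinate (both defining generators do, and the cone operation preserves this), so $t := \chi^{(0,0,1)}$ is a nonzero semi-invariant in $\cA$ and the inclusion $\CC[t] \hookrightarrow \cA$ makes $\cA$ a $\CC[t]$-algebra. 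Flatness is automatic since $\cA$ is torsion-free over $\CC[t]$ ($\TV(\widetilde\D)$ being an integral normal variety), and taking $\proj$ with respect to the first-factor $\ZZ$-grading (which commutes with multiplication by $t$) yields the morphism $\X(Q,v,m) \to \A^1$.

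For (c), I would prove $\cA[t^{-1}] \cong A \otimes_\CC \CC[t, t^{-1}]$ as $M'$-graded rings. This reduces to a polyhedral identity: after inverting $\chi^{(0,0,1)}$, evaluation of $\widetilde\D$ at $(u, k) \in M' \times \ZZ$ agrees with $\D(u)$ up to a $\CC(\PP^1)^*$-principal correction depending linearly on $k$, so both sides have matching graded dimensions and compatible multiplication maps. The $T$-action is transparent from the $M$-grading, preserved throughout, and the $\CC^*$-action of the test configuration, corresponding to the subgroup $(0,1) \in N' \times \ZZ = \widetilde N$, is dual to the last-factor $\ZZ$-grading and by construction covers the standard action on $\A^1 = \SPEC \CC[t]$.

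The main obstacle is the polyhedral identity in (c): one must carefully track the interaction between the shift $(v, 1/m)$ inserted at $Q$ and the original coefficients $\D_P$, producing the appropriate principal correction so that the general-fiber section ring matches $A$. Once this is established, the remaining verifications (global flatness, equivariance, and the polarization $\L$ restricting to $L$) are formal consequences of the polyhedral divisor formalism.
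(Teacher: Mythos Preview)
Your three–part plan matches the paper's strategy closely, but the execution differs in one place that matters. For properness, the paper is terser than you: since $\widetilde\sigma$ was \emph{defined} as the positive hull of the two sets $(v,1/m)+\sum_{P\neq Q}(\D_P,0)$ and $\sum_P(\D_P,0)$, the inclusion $\deg\widetilde\D\subset\widetilde\sigma$ is immediate ``by construction,'' and strictness is dispatched by the single observation that $0\notin\deg\widetilde\D$. Your Minkowski-sum computation and ``direct polyhedral argument'' are aiming at the same thing but are unnecessarily roundabout.

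The substantive difference is in your step (b). You show that $t=\chi^{(0,0,1)}$ lies in $\cA$, giving an inclusion $\CC[t]\hookrightarrow\cA$, and then pass to $\proj$. The paper instead computes the \emph{entire} degree-zero subalgebra $\cA_0$ (with respect to the first $\ZZ$-factor): any weight $(0,w,k)\in\widetilde\sigma^\vee$ has $(0,w)\in\sigma^\vee$, which forces $w=0$ because $A_0=\CC$ for the original section ring; this leaves only the weights $(0,0,k)$ with $k\geq 0$, whence $\cA_0=\CC[t]$. This stronger statement is what makes $\proj\cA\to\spec\cA_0=\A^1$ the structure morphism of a projective family. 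Your argument yields only a map $\proj\cA\to\A^1$ factoring through $\spec\cA_0$, without ruling out that $\cA_0$ is larger than $\CC[t]$; you should add the paper's weight computation here. Flatness is then argued identically in both (irreducibility/torsion-freeness over a smooth curve).

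Finally, your step (c) is more thorough than the paper: the paper's proof does not explicitly verify that the general fiber is $(X,L)$, simply declaring the family ``a test configuration as desired'' once flatness is in hand. Your localization argument $\cA[t^{-1}]\cong A\otimes_\CC\CC[t,t^{-1}]$ is the right way to fill this in; compare also the more detailed treatment of $\cA_t$ via \cite[Proposition~3.3]{ahs} that appears later in the paper's proof of the converse statement.
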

\begin{proof}
By construction $\deg (\widetilde \D)$ is contained in $\widetilde \sigma$, but the origin is clearly not contained in $\deg (\widetilde \D)$. Hence, $\widetilde \D$ is proper, see Section \ref{sec:pdiv}.

Via the $\proj$ construction, $\X(Q,v,m)$ comes equipped with a structure morphism to 
$\spec \cA_0$. For any weight $u\in \widetilde\sigma^\vee\cap \widetilde M$
of the form $u=(0,w,k)\in\ZZ\times M\times \ZZ$, we have $w=0$. Indeed, $(0,w)\in \sigma^\vee$ implies $w=0$. It follows that $k\geq 0$. But for such $u$, $\widetilde \D(u)=\D(0)=0$, so 
\[\cA_0=\CC[t]
\]
where $t=\chi^w$ has degree $w=(0,0,1)$. Hence, we have a $T\times \CC^*$-equivariant family $\X(Q,v,m)\to \A^1$ where $\CC^*$ acts on $\A^1$ in the standard fashion. Since $\X(Q,v,m)$ is irreducible, this family is flat and thus a test configuration as desired.
\end{proof}

We now wish to describe the special fiber (with the induced $\CC^*$ action) of such test configurations $\X(Q,v,m)$. To start out, consider the sublattice 
\[
\widetilde{M}_{m}=M'\times m\ZZ\subset \widetilde M.
\]
Let $\tau\subset \widetilde N_\RR$ be the cone generated by
\[
(0,1/m)+\sum_{\substack {
P\in\PP^1\\P\neq Q}}(\D_P,0)\qquad\textrm{and}\qquad (\D_Q,-1/m).
\]
Consider the semigroup 
\[
S=\{ (u,k)\in \widetilde{M}_{m}\cap \tau^\vee\ |\ \sum_{\substack {
P\in\PP^1\\P\neq Q }} \lfloor \D_P(u)\rfloor +k/m\geq 0\}.
\]
\begin{proposition}\label{prop:fiber}
Let $\X(Q,v,m)$ be as in Proposition \ref{prop:construction}, and $\X_0=\X_0(Q,v,m)$ the special fiber of this test configuration.
\begin{enumerate}
\item The special fiber $\X_0$ is isomorphic to the toric variety
\[
\proj \CC[S] 
\]
with induced $\CC^*$ action corresponding to the one-parameter subgroup generated by $(-mv,1)\in N'\times \frac{1}{m}\ZZ$.
\item The normalization of $\X_0$ is isomorphic to the toric variety 
\[
\proj \CC[\tau^\vee\cap \widetilde{M}_{m}]. 
\]
\item $\X_0$ is normal if and only if the set $\{\D_P\}_{P\neq Q}$ is admissible.
\end{enumerate}
\end{proposition}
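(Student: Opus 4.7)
The plan is to compute the homogeneous coordinate ring of $\X_0$ directly from $\cA$. Identifying $t \in \cA_0 \cong \CC[t]$ with a section of weight $(0,0,1) \in \widetilde M$, we have $\X_0 = \proj(\cA/t\cA)$, the scheme-theoretic fiber over $0 \in \A^1$.

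For each weight $(u',k) \in \widetilde M$, multiplication by $t$ is the natural inclusion $H^0(\PP^1, \CO(\lfloor\widetilde\D(u',k-1)\rfloor)) \hookrightarrow H^0(\PP^1, \CO(\lfloor\widetilde\D(u',k)\rfloor))$ induced by the inclusion of line bundles. Since $\widetilde\D_Q(u',k) = \langle v, u'\rangle + \D_Q(u') + k/m$ while $\widetilde\D_P(u', k)$ is $k$-independent for $P \neq Q$, the difference of floors equals $0$ or $[Q]$. In the first case the cokernel vanishes; in the second, the sequence $0 \to \CO(D) \to \CO(D+Q) \to \CO_Q \to 0$ together with $H^1(\PP^1, \CO(D)) = 0$ for $\deg D \geq -1$ yields a one-dimensional cokernel exactly when $\deg \lfloor\widetilde\D(u',k)\rfloor \geq 0$. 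The set $L \subset \widetilde M$ of weights at which the cokernel is nonzero forms a sublattice of index $m$, and the shear $\phi \colon L \to \widetilde M_m$, $(u',k)\mapsto (u', k + m\langle v, u'\rangle)$, is an isomorphism. Under $\phi$, the two nonvanishing conditions combine to give exactly membership in $\tau^\vee$ together with $\sum_{P\neq Q}\lfloor\D_P(u')\rfloor + k/m \geq 0$, i.e.\ the definition of $S$. The test configuration's $\CC^*$-action acts with weight $k$ in the original coordinates, which under the shear $\phi$ becomes $k - m\langle v, u'\rangle = \langle (-mv, 1), (u', k)\rangle$, giving the cocharacter $(-mv, 1) \in \widetilde N_m = N' \times \frac{1}{m}\ZZ$. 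This proves (1).

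For (2), the normalization of $\proj \CC[S]$ is $\proj$ of the saturation of $\CC[S]$ in $\CC[\widetilde M_m]$. A direct computation from the generators of $\tau$ gives $\tau^\vee = \{(u',k) \in \widetilde M_\QQ : \sum_{P \neq Q}\D_P(u') + k/m \geq 0 \text{ and } \D_Q(u') - k/m \geq 0\}$, and one checks that the saturation of $S$ equals $\tau^\vee \cap \widetilde M_m$. For (3), $\X_0$ is normal iff $S = \tau^\vee \cap \widetilde M_m$. Since $k \in m\ZZ$, the inequality $\sum_{P \neq Q}\D_P(u') + k/m \geq 0$ cutting out $\tau^\vee$ is equivalent to $\lfloor \sum_{P \neq Q}\D_P(u')\rfloor + k/m \geq 0$, so this equality reduces to $\sum_{P \neq Q}\lfloor\D_P(u')\rfloor = \lfloor \sum_{P \neq Q}\D_P(u')\rfloor$ for all relevant $u'$, which by the remark after the definition of admissibility is exactly the admissibility of $\{\D_P\}_{P \neq Q}$.

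The main obstacle will be the careful bookkeeping in step (1): identifying the natural lattice $L$ (where the cokernel is supported) with $\widetilde M_m$ via the shear $\phi$, and correctly tracking the test configuration's $\CC^*$-cocharacter through this identification to obtain $(-mv, 1)$.
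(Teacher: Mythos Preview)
Your treatment of parts (i) and (ii) follows essentially the same strategy as the paper: compute $\cA/(t)$ weight by weight as a quotient of $H^0$'s on $\PP^1$, and identify the support semigroup with $S$. The only difference is cosmetic: the paper first applies the lattice automorphism of $\widetilde N$ fixing $N'$ and sending $(0,1)\mapsto(-mv,1)$ to reduce to $\X(Q,0,m)$, whereas you compute for general $v$ and apply the dual shear $\phi$ afterward. These are the same move viewed from $N$ versus $M$.

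For part (iii), however, there is a genuine gap. You assert that ``$\X_0$ is normal iff $S=\tau^\vee\cap\widetilde M_m$'', but only one direction is immediate. If $\CC[S]$ is normal then certainly $\proj\CC[S]$ is; the converse fails in general, since $\proj$ of a non-normal graded ring can be normal when the non-normality is confined to low degrees. The paper closes this by (a) identifying $\CC[S]_i$ with $H^0(\X_0,\L_0^{\otimes i})$ for $i\gg 0$ via the long exact sequence and vanishing of $H^1(\X,\L^{\otimes i})$, and then (b) showing that if some $(u,k)\in\tau^\vee\cap\widetilde M_m$ lies outside $S$, then so does $((j\lambda+1)u,k')$ for all $j\geq 0$ and suitable $k'$, where $\lambda$ clears denominators of the $\D_P(u)$. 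This forces the discrepancy into arbitrarily high $\proj$-degree, so $\X_0$ cannot be normal.

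A related omission appears in your final reduction. You say the equality $S=\tau^\vee\cap\widetilde M_m$ comes down to $\sum_{P\neq Q}\lfloor\D_P(u')\rfloor=\lfloor\sum_{P\neq Q}\D_P(u')\rfloor$ ``for all relevant $u'$''. But if $\{\D_P\}_{P\neq Q}$ fails admissibility at some $u'$, you still need to produce an integer $k/m$ in the interval $[-\sum_{P\neq Q}\D_P(u'),\,\D_Q(u')]$ witnessing $(u',k)\in\tau^\vee\setminus S$; this interval can have length less than $1$. The paper again uses the $(j\lambda+1)u$ scaling to enlarge it. Without these two arguments your proof of (iii) is incomplete.
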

\begin{proof}
Consider the lattice automorphism $\phi:\widetilde N=N'\times \ZZ \to \widetilde N$ which is the identity on $N'$ and sends $(0,1)$ to $(-mv,1)$. 
The pushforward of $\widetilde \D$ under $\phi$ is exactly the p-divisor for the test configuration $\X(Q,0,m)$.
 However, after this lattice automorphism, the one-parameter subgroup corresponding to the $\CC^*$-action on the original test configuration is given by $\phi((0,1))=(-mv,1)$. Hence, we may reduce to considering the special fiber of $\X(y,0,m)$ with a modified $\CC^*$-action.
In what follows, we will thus consider the p-divisor $\widetilde \D$ for the test configuration $\X(Q,0,m)$.

We now proceed in a similar fashion to the proof of \cite[Proposition 4.11]{hausen:10a}. With notation as above,
\[\X_0=\proj \cA/(t).\]
The degree $(u,k)$ piece of $B=\cA/(t)$ is simply
\[
H^0(\PP^1,\CO(\lfloor \widetilde\D(u,k)\rfloor))/H^0(\PP^1,\CO(\lfloor \widetilde\D(u,k-1)\rfloor))
\]
and hence is non-zero if and only if
\begin{enumerate}
\item $\lfloor \widetilde\D(u,k)\rfloor-\lfloor \widetilde\D(u,k-1)\rfloor>0$; and
\item $\widetilde\D(u,k))\geq 0$.
\end{enumerate}
Since we have
\begin{align*}
\lfloor \widetilde\D(u,k)\rfloor-\lfloor \widetilde\D(u,k-1)\rfloor=(\lfloor \min \{\D_Q(u),k/m\} \rfloor 
- \lfloor \min \{\D_Q(u),(k-1)/m\} \rfloor)\cdot Q
\end{align*}
the first condition implies that if $B_{(u,k)}\neq 0$ we must have $(u,k)\in \widetilde{M}_{m}$. Furthermore, $(u,k)$ must lie in the cone dual to that spanned the by right hand set of generators of $\tau$.
Satisfying the second condition implies that $(u,k)$ is in the cone dual to that spanned by the left hand set of generators of $\tau$. In fact, given $(u,k)\in \tau^\vee\cap \widetilde{M}_{m}$, satisfying the second condition is equivalent to being in the semigroup $S$.
We thus see that the projective coordinate ring of $\X_0$ has non-zero components exactly in the degrees of $S$. Each of these components a one-dimensional vector space and $\X_0$ is irreducible by construction; the first statement of the proposition follows.

For the second statement of the proposition, it is straightforward to check that $S$ generates the lattice $\widetilde{M}_{m}$ and that the weight cone of $S$ is exactly $\tau^\vee$. Hence, $\CC[\tau^\vee \cap \widetilde{M}_{m}]$ is the normalization of $\CC[S]$ and the second statement follows.

For the third statement, we first claim that $\X_0$ is normal if and only if $\CC[S]$ is normal, that is, is equal to $\CC[\tau^\vee\cap \widetilde{M}_{m}]$. Certainly $\CC[S]$ normal implies that $\X_0$ is normal. Conversely, note that $\CC[S]$ is isomorphic to $\bigoplus_i H^0(\X,\L^{\otimes i})_{|\X_0}$. Using the exact sequence
\[
\begin{CD}
0 @>>> \L^{\otimes i} @>>> \L^{\otimes i} @>>> \L_0 @>>> 0
\end{CD}
\] 
and the fact that $H^1(\X,\L^{\otimes i})=0$ for $i \gg 0$, we see that for $i\gg 0$, the degree $i$ graded piece of $\CC[S]$ is isomorphic to $H^0(\X_0,\L_0^{\otimes i})$. 
Since $\X_0$ is normal if and only if $\bigoplus_i H^0(\X_0,\L_0^{\otimes i})$ is normal, we that $\X_0$ normal implies $\CC[S]$ and $\CC[\tau^\vee \cap \widetilde{M}_m]$ agree in sufficiently high degree. But suppose that there exists an element $(u,k)\in \tau^\vee \cap \widetilde{M}_m$ with $(u,k)\notin S$, and let $\lambda\in \NN$ be such that $\lambda \D_z(u)\in\ \ZZ$ for all $z\neq y$.
Then 
\[
\sum_{z\neq y}  \D_z(( j\lambda+1) u)- \sum_{z\neq y} \lfloor \D_z(( j\lambda+1) u)\rfloor
=\sum_{z\neq y} \D_z( u)- \sum_{z\neq y} \lfloor \D_z(u)\rfloor\geq 1
\]
for any $j\geq 0$, so for any $j\geq 0$ there is $k'\in\ZZ$ such that
$( (j\lambda +1)u,k')$ belongs to $\tau^\vee\cap \widetilde{M}_m$ but not $S$. We conclude that if $\X_0$ is normal, $\CC[S]=\CC[\tau^\vee\cap \widetilde{M}_m$].

We have now reduced the third statement to showing that $\CC[S]$ is normal if and only if $\{\D_P\}_{P\neq Q}$ is admissible. On the one hand, if $\{\D_P\}_{P\neq Q}$ is admissible, we will always have 
\[
\sum_{P\neq Q}\D_P( u)- \sum_{P\neq Q}\lfloor \D_P(u)\rfloor <1
\]
for any $u\in M$, so $S=\tau^\vee \cap \widetilde{M}_m$. Let us suppose instead that $\{\D_P\}_{P\neq Q}$ is not admissible. Then  there exists some $u\in \sigma^\vee\cap M$ and points $P_1,P_2\in\PP^1$ such that neither $\D_{P_1}(u)$ nor $\D_{P_2}(u)$ are integral. After passing to some multiple of $u$, 
\[
\sum_{P\neq Q}\D_P( u)- \sum_{P\neq Q}\lfloor \D_P(u)\rfloor \geq 1.
\]
Furthermore, after potentially replacing this $u$ by a sufficiently large multiple of the form $(j\lambda+1)u$, we may assume that there is some $k\in \ZZ$ with $(u,k)\in\tau^\vee\cap \widetilde{M}_m$. If we take $k$ to be as small as possible, then similar to above $(u,k)\in\tau^\vee \cap\widetilde{M}_m$ but it does  not belong to $S$. Hence, $\CC[S]$ normal implies that $\{\D_P\}_{P\neq Q}$ is admissible.
\end{proof}

\begin{remark}
One could also calculate the normalization of the special fiber of $\X(y,v,m)$ using \cite[\S 7 and Theorem 10.1]{affine-tvar}. Indeed, the special fiber is the closure of the $T$-orbit corresponding to the vertex $(v,1/m)$ of $\widetilde \D_Q$, and hence is the toric variety 
for the cone 
\[\QQ_{\geq 0}\cdot \left(\widetilde \D_Q-\left(v,\frac{1}{m}\right)\right)\]
with respect to the lattice 
\[\widetilde N_m=N'\times \frac{1}{m}\ZZ.\]
But the lattice automorphism
\begin{align*}
\widetilde N_m&\to\widetilde N_m\\
(w,k)&\mapsto (w-mv,k)\in N'\times\frac{1}{m}\ZZ
\end{align*}
sends the above cone exactly to $\tau$.
\end{remark}

\begin{remark}
The degeneration $\X(Q,v,m)$ can also be understood as a family of complexity-one $T$-varieties by moving the coefficients $\D_P$ together for $P\neq Q$, as we now explain. This is not necessary for the sequel, but perhaps of independent interest. For notational purposes, assume that $Q=\infty\in \PP^1$.
 Consider the toric surface $S=\PP^1\times \A^1$ with $y_0,y_1$ coordinates on $\PP^1$ and $t$ a coordinate on $\A^1$.
For $\lambda \in \PP^1\setminus\{\infty\}$, we have the divisor
\[
D_\lambda=V(\lambda t^my_0- y_1)\subset S
\]
along with 
\begin{align*}
D_Q:=V(y_0)\\
D_v:=V(t).
\end{align*}
The polyhedral divisor 
\[
\E=\sum_{P\in\PP^1} \D_P\otimes D_P+(v+\sigma)\otimes D_v
\]
is $\CC^*$-invariant with respect to the action given by $\deg t=1$, $\deg (y_1/y_0)=m$. We leave it for the reader to check that this polyhedral divisor is in fact a p-divisor.

With respect to this $\CC^*$ action, the Chow quotient of $S$ is $\PP^1$. Hence, using the upgrade procedure of \cite[\S2]{upgrades}, the $T$-variety defined by $\E$ on $S$ can be described as a $T\times\CC^*$-variety by a p-divisor on $\PP^1$. We leave it for the reader to verify that the resulting p-divisor is exactly $\widetilde \D$.
Thus, as a $T$-variety, $\X$ is encoded by the p-divisor $\E$ on $S$.  Furthermore, the structure morphism $\X\to \A^1$ is exactly the map induced by the projection $\pi:S\to \A^1$.

To understand the fibers of $\X\to \A^1$, we can restrict the p-divisor $\E$ to the fibers of $\pi:S\to\A^1$, see \cite[\S 2]{ilten:12a} for a similar situation.
For $t\neq 0$, $\E_t:=\E_{|\pi^{-1}(t)}$ is just the p-divisor
\[
\E_t=\sum_{P\neq Q} \D_P\otimes (t^mP)+\D_Q\otimes Q
\]
and the corresponding fiber $\X_t$ is equivariantly isomorphic to $\TV(\D)$ via the automorphism of $\PP^1$ induced by $y_1/y_0\mapsto t^my_1/y_0$.
For $t=0$, 
\[
\E_0=\left(\sum_{P\neq Q} \D_P\right)\otimes (0)+\D_Q\otimes Q
\]
and this describes the normalization of the special fiber, cf. \cite[Theorem 2.8]{ilten:12a}.
Indeed, this is a $\CC^*$-invariant p-divisor on $\PP^1$, and the upgrade procedure tells us that after adding an additional $\CC^*$-action, the resulting $T\times\CC^*$-variety is exactly the toric variety $\TV(\tau)$, where $\tau$ is as in Proposition \ref{prop:fiber}.

 In any case, we observe that this degeneration of $\TV(\D)$ occurs by collapsing the coefficients $\D_P$ of $\D$ together for $P\neq Q$. Note that (as we have observed in Proposition \ref{prop:fiber}) the lattice point $v$ has no effect on the special fiber itself, and only changes the induced extra $\CC^*$-action on it.
\end{remark}

\subsection{Special Test Configurations}
We now  show that many $T$-equivariant test configurations are of the above form:
\begin{theorem}
Let $(X,L)$ be a polarized complexity-one projective rational $T$-variety, and $(\X,\L)$ a $T$-equivariant test configuration with $\X$ normal. Assume that the special fiber $\X_0$ is reduced and irreducible, and $(\X,\L)$ is not a product configuration.
Then $\X$ is of the form $\X(y,v,m)$ from Proposition \ref{prop:construction}.
\end{theorem}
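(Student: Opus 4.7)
The plan is to reverse the construction of Proposition~\ref{prop:construction}: apply the Altmann--Hausen structure theorem to the affine cone over $(\X,\L)$, and show that the resulting p-divisor must take the stated form. Concretely, I would form the section ring $\cA = \bigoplus_{i\geq 0} H^0(\X,\L^{\otimes i})$; after replacing $\L$ by a sufficiently divisible multiple, normality of $\X$ together with relative ampleness of $\L$ over $\A^1$ imply that $\spec \cA$ is a normal affine variety. The three commuting gradings---the $\proj$-grading, the $T$-action, and the $\CC^*$-action of the test configuration---assemble into a $\widetilde M = \ZZ\times M\times \ZZ$-grading, making $\spec \cA$ a normal affine variety on which a torus $\widetilde T$ of dimension $\dim X+1$ acts effectively. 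Since $\dim \spec \cA = \dim X+2$, this action has complexity one, and the Altmann--Hausen theorem produces a p-divisor $\widetilde \D$ on a normal semiprojective curve $\widetilde Y$ with $\spec \cA \cong \TV(\widetilde \D)$ equivariantly.

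The base curve is $\widetilde Y = \PP^1$: since $\CC^*$ scales the coordinate on $\A^1$, we have $\CC(\X)^{\CC^*} = \CC(X)$, and hence $\CC(\widetilde Y) = \CC(\X)^{T\times\CC^*} = \CC(X)^T = \CC(\PP^1)$. To pin down the coefficients of $\widetilde \D$, I would use the classification of invariant prime divisors recalled in Section~\ref{sec:tvars}. Invariant prime divisors of $\X$ dominating $\A^1$ are $\CC^*$-equivariant closures of invariant prime divisors of the generic fiber $X$, while by irreducibility the only invariant prime divisor contained in a fiber is $\X_0$. Under the rational $(T\times\CC^*)$-quotient $\X \dashrightarrow \PP^1$, the closure $\overline{D_{P,w}}$ of each $D_{P,w}\subset X$ lies over $P$, while $\X_0$ lies over a single distinguished point $Q\in\PP^1$ (the effective $T\times\CC^*$-action makes $\X_0$ toric, so its invariant rational functions are constant). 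Comparing with the p-divisor $\D$ of $X$ then forces $\widetilde \D_P = (\D_P,0) + \widetilde \sigma$ for every $P \neq Q$, while at $P = Q$ the interaction with $\X_0$ produces a single uniform shift in the new $\ZZ$-direction, yielding $\widetilde \D_Q = (\D_Q,0) + (v,1/m) + \widetilde \sigma$ for some $v \in N'_\QQ$ and $m \in \NN$ with $mv \in N'$ and $m$ minimal.

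The tail cone $\widetilde \sigma$ is now forced by the degree condition $\deg \widetilde \D \subsetneq \widetilde \sigma$ together with the computed coefficients, reproducing the two generators appearing in the construction of Proposition~\ref{prop:construction}. Admissibility of $\{\D_P\}_{P \neq Q}$ follows from normality of $\X_0$ via Proposition~\ref{prop:fiber}(iii), and the non-product hypothesis guarantees $(v,1/m) \neq 0$. Reading off $(Q,v,m)$ from $\widetilde \D$ then exhibits $\X \cong \X(Q,v,m)$.

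The main obstacle will be pinning down the precise shifted shape of $\widetilde \D_Q$ with a single coherent shift vector, ruling out more complicated polyhedra with independently varying heights or additional new vertices. The key input is that $T$-equivariance of the test configuration forces the $\CC^*$-weights on the closures $\overline{D_{Q,w}}$ to arise from a single one-parameter subgroup of $\widetilde T$; combined with irreducibility of $\X_0$ and the principal divisor formula \eqref{eq:principal}, this pins the shift $(v,1/m)$ down uniformly across all vertices of $\widetilde \D_Q$. Carefully controlling these constraints is where the bulk of the technical work lies.
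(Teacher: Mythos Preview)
Your high-level plan---pass to the section ring, apply Altmann--Hausen to obtain a p-divisor $\widetilde\D$ on $\PP^1$, then pin down its coefficients---is precisely the paper's approach. The substantive gap is in how you locate $\X_0$ among the invariant prime divisors of $\TV(\widetilde\D)$. You assert that $\X_0$ lies over a single point $Q\in\PP^1$ because ``the effective $T\times\CC^*$-action makes $\X_0$ toric,'' but effectiveness of this action on $\X_0$ is exactly what is at stake. The cone over $\X_0$ is one new $\widetilde T$-invariant prime divisor, and it is either \emph{vertical} (a new vertex $(v,1/m)$ of some $\widetilde\D_Q$) or \emph{horizontal} (a new extremal ray of $\widetilde\sigma$). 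In the horizontal case the test-configuration $\CC^*$ acts on $\X_0$ through a one-parameter subgroup of $T$, so $\X_0$ is \emph{not} toric; the paper then shows directly that $\widetilde\D=\sum_P(\D_P+\rho)\otimes P$ and hence $\X\cong X\times\A^1$. Thus the non-product hypothesis is what excludes the horizontal case; it is not, as you write, what ensures $(v,1/m)\neq 0$.

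Two smaller issues. First, the paper begins by adjusting $\widetilde\D$ so that the base coordinate satisfies $t=\chi^{(0,0,1)}$; this forces $\widetilde\D_P(w)=0$ for every $P$ and is what pins the old vertices to height $0$---without this normalization your identification $\widetilde\D_P=(\D_P,0)+\widetilde\sigma$ for $P\neq Q$ is unjustified. Second, in the vertical case the proof shows $\widetilde\D_Q$ retains the vertices of $(\D_Q,0)$ at height $0$ and acquires \emph{one additional vertex} $(v,1/m)$; this is not the uniform translation $(\D_Q,0)+(v,1/m)+\widetilde\sigma$ you propose, so the ``main obstacle'' you flag has a different shape than you anticipate. (Also, the hypothesis is only that $\X_0$ is reduced and irreducible, not normal, so your admissibility remark via Proposition~\ref{prop:fiber}(iii) is extraneous here.)
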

\begin{proof}
Consider the $\CC$-algebra
\[
\cA = \bigoplus_i H^0(\X,\L^{\otimes i}).
\]
Then we obtain $\X = \proj \cA$. The $T\times \CC^*$ action on $(\X,\L)$ induces an effective grading of $\cA$ by 
\begin{align*}
\widetilde M=M'\times \ZZ,\\
M'=\ZZ\times M,
\end{align*}
 where $M$ is the character lattice of $T$, the $\ZZ$-factor of $M'$ corresponds to the standard $\ZZ$-grading on $\cA$ as in the direct sum decomposition above, and the $\ZZ$-factor of $\widetilde M$ is induced by the $\CC^*$-action of the test configuration.
Since $X$ is rational and $\X$ is normal, $\cA$ is both rational and normal. Furthermore, we have $\cA_{(0,0,0)}=\CC$. Hence, there exists a p-divisor $\widetilde \D$ on $\PP^1$ with tailcone $\widetilde \sigma\subset \widetilde{N}_\RR$ such that

\[
\cA \cong \bigoplus_{u\in \widetilde\sigma^\vee \cap \widetilde M} H^0(\PP^1,\CO(\lfloor \widetilde \D(u)\rfloor))\cdot \chi^u.
\]

By construction, the degree zero piece $\cA_0$ of $\cA$ with respect to the standard grading is of the form $\CC[t]$, where $t$ has degree $w=(0,1)\in M'\times \ZZ$ in the $\widetilde M$ grading.
If we write $t=s\cdot \chi^{w}$ for some rational function $s\in\CC(\PP^1)$, we can replace $\widetilde \D$ by the p-divisor 
\[\widetilde \D+\left( (0,1)+\widetilde\sigma\right)\otimes \Div (s).
\]
This new p-divisor gives rise to a $\CC$-algebra with an $\widetilde M$-graded isomorphism to $\cA$. 
Indeed, the $\CC$-algebra for the original p-divisor is isomorphic to that for the corrected  p-divisor via the map
$f\cdot \chi^{(u,k)}\mapsto s^{-k} f \cdot \chi^{(u,k)}$.
Hence, after making such a substitution, we can assume that $\widetilde \D(kw)$ is trivial for all $k\in\NN$, and $t=\chi^w$.

The special fiber $\X_0$ of $\X$  is given by $\proj \cA/(t)$. Since $\X_0$ is reduced and irreducible, it is a prime divisor in $\X$ and the inclusion $(\X \setminus \X_0) \subset \X$ is given by the localization in $t$, that is, $\X \setminus \X_0 = \proj \cA_t$. 
Now, $\cA_t$ corresponds to the p-divisor
\[
\D'=:\emptyset\otimes (\Div(t/\chi^w)+\widetilde \D(w))+\sum_{P\in \PP^1} \face(\widetilde \D_P,w)\otimes P
\]
by \cite[Proposition 3.3]{ahs}. But by assumption, $(\Div(t/\chi^w)+\widetilde \D(w))$ is trivial, hence
$\D'$ has no $\emptyset$-coefficient. In addition, the minimal value of $w$ on each $\widetilde \D_P$ equals zero, since $\widetilde\D(kw)$ is trivial for all $k\in \NN$. Hence we have
\[
\D'=\widetilde \D\cap w^\perp=\sum_{P\in\PP^1}\left( \widetilde \D_P\cap w^\perp\right)\otimes P.
\]
Since $(\X \setminus \X_0)$ is equivariantly isomorphic to the product $X \times \CC^*$, it follows that the p-divisor corresponding to the section ring of $L$ is also given by $\widetilde \D\cap w^\perp$, but this time viewed as a p-divisor with respect to the lattice $N'=\widetilde N\cap w^\perp$ instead of $\widetilde N$.
We denote this p-divisor by $\D$.

Note that $\TV(\D')$ differs from $\TV(\widetilde \D)$ by one exactly one $(\CC^*\times T \times \CC^*)$-invariant prime divisor $H$, corresponding to the special fiber $\X_0$. In the terminology of \cite{petersen} there are two possibilities: $H$ is either \emph{horizontal} or \emph{vertical}.
Suppose first that  $H$ is horizontal. Then the coefficients of $\widetilde \D$ and $\D'$ have the same vertices. In particular, all vertices of $\widetilde \D$ lie in $w^\perp$.  
Furthermore, $\widetilde \sigma=\tail(\widetilde \D)$ has exactly one extremal ray $\rho$ which is not an extremal ray of $\sigma'=\tail(\D')$.
But since the vertices of the coefficients of $\widetilde \D$ and $\D'$ have the same vertices, any non-extremal ray of $\sigma'$ is necessarily non-extremal in $\widetilde \sigma$. Thus, $\rho$ cannot be a ray of $\sigma'$ at all.
 Note that any ray of $\widetilde\sigma$ not contained in $w^\perp$ is necessarily extremal. Likewise, any ray of $\widetilde\sigma$ contained in $w^\perp$ is already in $\sigma'$. Hence, every non-extremal ray of $\widetilde\sigma$ is already in $\sigma'$.
We conclude that $\widetilde\sigma=\sigma'+\rho$, and thus
 $\widetilde\D=\sum(\D_P+\rho)\otimes P$, for some ray $\rho\subset \widetilde N_\RR$ with primitive lattice generator $(v,k)\in N'\times \ZZ$, where $k\neq 0$.
By \eqref{eq:principal}, the multiplicity of the divisor $D_\rho$ corresponding to $\rho$ in $\Div(t)=\Div(\chi^w)$ is $k$, so we conclude that $k=1$, and without loss of generality $\rho$ is generated by $(0,1)$. This implies that $\X$ is a product configuration. 

Suppose instead that $H$ is vertical.
 In the vertical case there is exactly one point $Q \in \PP^1$ with the vertices of $\widetilde \D_Q$ not equal to those of $\D_Q'$, and for this $Q$ we have exactly one additional vertex, say $(v,k)\in N_\QQ'\times \QQ$. Again by \eqref{eq:principal}, the multiplicity of the divisor $D_{Q,(v,k)}$ corresponding to $(v,k)$ in $\Div(t)=\Div(\chi^w)$ is $k\cdot \mu$, where $\mu$ is the smallest natural number such that $\mu\cdot (v,k)$ is in $N$.
As above, we must thus have $k\cdot \mu=1$, so we have $k=1/m$, where $m\cdot v\in N'$.

Now, the data of $\D$, $Q$, $v\in N_\QQ'$, and $m\in \NN$ completely determine $\widetilde\sigma=\tail \widetilde \D$, and hence $\widetilde \D$. Indeed, we have that
\[\Delta:=\conv(\deg \D',(v,1/m) + \sum_{P \neq Q} \D'_P) \subset \deg \widetilde \D.\]
Hence, the positive hull $C$ of $\Delta$ is in $\widetilde\sigma$. 
Note also that
\begin{equation}\label{eqn:deg} 
\deg \widetilde \D=\Delta+\widetilde\sigma.
\end{equation}
We claim that $C$ is in fact equal to $\widetilde\sigma$. Suppose that $\rho\in \widetilde\sigma$ is a ray not contained in $C$. 
Then by Equation \eqref{eqn:deg}, $\rho\cap \deg\widetilde \D=\emptyset$, and $\rho$ is an extremal ray. But this is impossible, since $\TV(\D')$ differs from $\TV(\widetilde D)$ by the vertical divisor $H$.
We conclude that indeed $\widetilde \D$ is determined by $\D$, and $(Q,v,m)$. Comparing with Proposition \ref{prop:construction}, we see that the test configuration of $\X$ is equivariantly isomorphic to $\X(Q,v,m)$, since $\D_P'=(\D_P,0)$.
\end{proof}

\subsection{Dual Description}
It will be useful to rephrase some of the results of this section in terms of divisorial polytopes. Let $\Psi:\Box\to \wdiv_\QQ\PP^1$ be the divisorial polytope corresponding to the polarized toric variety $(X,L)$.
\begin{corollary}
\label{cor:special-fiber}
The normalization $\widetilde \X_0$ of the special fiber of $\X=\X(Q,v,m)$ is the polarized toric variety given by the polytope
\[\Delta_Q = \Big\{(u,a) \in M_\QQ\otimes \RR \; \Big| \; u \in \Box,\; -\sum_{P \neq Q} \Psi_P(u) \leq a \leq \Psi_Q(u)\Big\}\]
with respect to the lattice $M\times \ZZ$.
The induced $\CC^*$-action on $\widetilde \X_0$ is given by the one-parameter subgroup $(-m\overline v,m) \in  N\times \ZZ $, where $\overline v$ is the projection of $v\in N_\QQ'$ to $N_\QQ$. 
\end{corollary}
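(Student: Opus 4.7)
The strategy is to translate Proposition~\ref{prop:fiber} directly into the divisorial polytope language using the dictionary $\D_P((u,k)) = k\,\Psi_P(u/k)$ from Section~\ref{sec:divisorial-polytopes}. The main work is careful bookkeeping of the three lattices $M$, $M' = M \times \ZZ$, and $\widetilde M = M \times \ZZ \times \ZZ$, together with the final rescaling $m\ZZ \leftrightarrow \ZZ$. Write $v = (\overline v, v') \in N_\QQ \times \QQ$ for the decomposition used in the proposition.

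First I would dualise $\tau \subset \widetilde N_\QQ$. The two polyhedral generators of $\tau$ share the tail cone $\sigma \times \{0\}$, so $(u,k,l) \in \tau^\vee$ forces $(u,k) \in \sigma^\vee$; minimising the pairing of $(u,k,l)$ against each of the two generating polyhedra then yields the two inequalities
\[
l/m + \sum_{P \ne Q} \D_P\bigl((u,k)\bigr) \;\ge\; 0
\qquad\text{and}\qquad
\D_Q\bigl((u,k)\bigr) - l/m \;\ge\; 0.
\]
Since $\Box$ is bounded and each $\Psi_P$ is continuous on $\Box$, the slice $\tau^\vee \cap \{k=1\}$ is a bounded polytope whose cone recovers $\tau^\vee$. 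Substituting $\D_P((u,1)) = \Psi_P(u)$ and $\sigma^\vee \cap \{k=1\} = \Box \times \{1\}$ identifies the slice with
\[
\bigl\{(u,l)\;\big|\;u \in \Box,\; -m\!\sum_{P \ne Q}\!\Psi_P(u) \le l \le m\,\Psi_Q(u)\bigr\},
\]
viewed as a polytope in $M \times m\ZZ$. Rescaling the last coordinate by $1/m$ identifies this polytope with $\Delta_Q$ in the lattice $M \times \ZZ$, giving the first half of the corollary.

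For the induced $\CC^*$-action, Proposition~\ref{prop:fiber} supplies the cocharacter $(-mv, 1) = (-m\overline v, -mv', 1) \in N \times \ZZ \times \tfrac{1}{m}\ZZ = (\widetilde M_m)^*$. The acting torus of the Proj is the quotient by the grading cocharacter $(0,1,0)$, so the induced action is described by projecting onto $N \times \tfrac{1}{m}\ZZ$. Because $mv \in N'$, the middle component $-mv'$ is an integer and thus vanishes in the quotient, leaving $(-m\overline v, 1) \in N \times \tfrac{1}{m}\ZZ$. The same rescaling applied above on the character side multiplies the last cocharacter coordinate by $m$, yielding $(-m\overline v, m) \in N \times \ZZ$, as claimed.

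The argument is essentially routine dualisation; the only point that needs care is ensuring that the degree cocharacter is correctly killed in the quotient torus, which works precisely because the hypothesis $m v \in N'$ from the construction makes the relevant middle component integral.
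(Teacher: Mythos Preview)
Your proof is correct and follows essentially the same route as the paper: slice $\tau^\vee$ at the polarisation level $k=1$, read off the two bounding inequalities via $\D_P((u,1))=\Psi_P(u)$, and then rescale the last coordinate by $1/m$ to pass from the lattice $M\times m\ZZ$ to $M\times\ZZ$; the $\CC^*$-action is obtained exactly as you describe by projecting $(-mv,1)$ past the grading direction and applying the dual rescaling. One small remark: the middle component $-mv'$ vanishes in the quotient simply because you are projecting it away---the integrality coming from $mv\in N'$ is what guarantees $(-mv,1)$ lies in the cocharacter lattice to begin with, not what makes the projection kill it.
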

\begin{proof}
  By construction, the cone $\tau$ from Proposition \ref{prop:fiber}  describes the normalization $\widetilde{X_0}$ of $\X_0$, with polarization induced by $\L$. Hence, the polarized toric variety $(\widetilde{X}_0, \L_{\widetilde{X}_0})$ is given by the polytope 
\[
\Delta_Q'  \quad = \quad \tau^\vee \cap (\{1\}\times M_\RR\times \RR).
\]
with respect to the lattice $M\times m\ZZ$. 
Taking the description of $\tau$ into account, we have
\[
\Delta_Q'  = \Big\{(u,a) \in M_\QQ\otimes \RR \; \Big| \; u \in \Box,\; -\sum_{P \neq Q} m\Psi_P(u) \leq a \leq m\Psi_Q(u)\Big\}.
\]
The polytope $\Delta_P$ above is simply the image of $\Delta_P'$ under the map induced by the lattice isomorphism $\phi:M\times m\ZZ \to M\times \ZZ$ sending $(u,a)$ to $(u,a/m)$.
Again by Proposition \ref{prop:fiber}, the $\CC^*$ action on $\X_0$ is the one-parameter subgroup given by the image under $\phi^*$ of the projection of $(-mv,1)$ to $N\times \frac{1}{m}\ZZ$; this is exactly $(-m\overline v, m)\in N\times \ZZ$.
\end{proof}

\begin{remark}
From the above corollary, we see that the $\CC^*$-action on $\widetilde \X_0$ is primitive if and only if $m$ is the smallest natural number such that $mv\in N$. In fact, from the point of view of $K$-stability, the above corollary shows that among test configurations of the form $\X(Q,v,m)$, we only need to consider those with $m=1$ and $v\in N$.
\end{remark}

\begin{corollary}\label{cor:normal1}
The special fiber of $\X(Q,v,m)$ is normal if and only if for each $u\in \Box$, $\Psi_P$ has  non-integral slope at $u$ for at most one $P \neq Q$. 
\end{corollary}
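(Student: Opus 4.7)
The plan is to invoke Proposition~\ref{prop:fiber}(iii), which identifies normality of $\X_0$ with admissibility of the collection $\{\D_P\}_{P\neq Q}$, and then to translate that admissibility into the asserted slope condition on $\Psi$. The translation rests on the duality from Section~\ref{sec:divisorial-polytopes}: via $\Psi_P(u)=\D_P((u,1))$, the vertices $(s,h)\in N_\QQ\times\QQ$ of the polyhedron $\D_P$ are in order-reversing bijection with the maximal affine pieces of $\Psi_P$, with the vertex $(s,h)$ corresponding to the region on which $\Psi_P(u)=\langle s,u\rangle+h$.

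The key lemma I would establish is that a vertex $(s,h)$ of $\D_P$ is a lattice point (i.e., lies in $N\times\ZZ$) if and only if its slope $s$ lies in $N$. Only the nontrivial direction requires work: if $s\in N$, then at any integer corner $u^*\in M$ of the corresponding region of linearity we have $h=\Psi_P(u^*)-\langle s,u^*\rangle\in\ZZ$ by condition~(ii) of Definition~\ref{def:divpol}. It follows that for generic $(u,k)\in\sigma^\vee$, where $\face(\D_P,(u,k))$ is a single vertex dual to the region of $\Psi_P$ containing $u/k$, the face fails to be a lattice polyhedron precisely when $\Psi_P$ has non-integral slope at $u/k\in\Box$.

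Putting these together, admissibility of $\{\D_P\}_{P\neq Q}$---the condition that at each $(u,k)\in\sigma^\vee$ at most one of the faces $\face(\D_P,(u,k))$ is non-lattice---translates, by taking $k=1$ and letting $u$ range over $\Box$, into exactly the statement that at each $u\in\Box$ at most one $\Psi_P$ with $P\neq Q$ has non-integral slope at $u$. The main (mildly) delicate point I expect is checking the non-generic $(u,k)$, where $\face(\D_P,(u,k))$ is higher-dimensional; but such a face is a lattice polyhedron iff all its vertices are, and those vertices are precisely the vertices of $\D_P$ dual to the maximal pieces of $\Psi_P$ adjacent to $u/k$, so no strictly new constraint arises from these boundary directions. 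Combining with Proposition~\ref{prop:fiber}(iii) then yields the corollary.
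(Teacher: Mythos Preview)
Your proposal is correct and follows exactly the paper's approach: invoke Proposition~\ref{prop:fiber}(iii) and translate admissibility of $\{\D_P\}_{P\neq Q}$ into the slope condition via the duality $\Psi_P(u)=\D_P((u,1))$. The paper compresses your key lemma and the generic/non-generic discussion into the single assertion that $\Psi_P$ has non-integral slope at $u$ if and only if $\face(\D_P,(u,1))$ is not a lattice polytope; you have simply unpacked this (correctly), including the observation that integrality of the vertices of the graph of $\Psi_P$ forces $h\in\ZZ$ once $s\in N$.
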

\begin{proof}
Consider a generic $u\in\Box$. If $\D$ is the p-divisor corresponding to $\Psi$, note that $\Psi_P$ has non-integral slope at $u$ if and only if the vertex $\face(\D_P,(u,1))$ is not a lattice element.
In other words, the collection of polyhedra $\{\D_P\}_{P\neq Q}$ is admissible if and only if for each $u\in \Box$, $\Psi_P$ has non-integral slope at $u$ for at most one $P\neq Q$.
 The claim is now a direct consequence of Proposition \ref{prop:fiber}. 
\end{proof}

\begin{corollary}\label{cor:normal-special-fiber}
Suppose that $X$ is Fano and canonical with $L$ the anticanonical divisor. Then the special fiber of $\X(Q,v,m)$ is normal if and only if $\Psi_P(0)$ is non-integral for at most one $P \neq Q$. 
\end{corollary}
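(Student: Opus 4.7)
The plan is to reduce normality of $\X_0 = \X_0(Q,v,m)$ to a condition at the single point $u = 0 \in \Box$, using the Fano hypothesis. Corollary~\ref{cor:normal1} tells us that normality is equivalent to the slope condition: for each $u \in \Box$, at most one $P \neq Q$ has $\Psi_P$ of non-integral slope at $u$. The heart of the reduction is a single-$P$ strengthening of Remark~\ref{rem:psi0}:
\emph{for each $P$, one has $\Psi_P(0) \in \ZZ$ if and only if every slope of $\Psi_P$ lies in $N$.}

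To prove this key equivalence, I will imitate the argument of Remark~\ref{rem:psi0} applied to a single $P$. Each facet of the graph of $(\Psi_P + a_P + 1)$, locally of the form $u \mapsto \langle u, v \rangle + c$, has lattice distance $1$ from the origin by the Fano condition, which translates (by computing the primitive integral normal) to $\mu(v) \cdot c = 1$, where $\mu(v)$ is the minimal positive integer with $\mu(v) v \in N$. Concavity of $\Psi_P + a_P + 1$ gives $c \geq \Psi_P(0) + a_P + 1$ for every facet, with equality exactly when the facet's domain contains $0$. If $\Psi_P(0) \in \ZZ$, then the Fano condition forces $\Psi_P(0) + a_P + 1$ to be a positive integer and hence $\geq 1$, so $c \geq 1$ for every facet; combined with $\mu(v) \geq 1$ and $\mu(v) c = 1$, this forces $\mu(v) = c = 1$ throughout, and all slopes of $\Psi_P$ are integral. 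The converse is immediate: choose any facet containing $0$ in its domain; integrality of its slope forces $\mu(v) = 1$, hence $c = 1$ and $\Psi_P(0) = -a_P \in \ZZ$.

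Granting this equivalence, the corollary follows by a straightforward quantifier manipulation. If all but at most one $P \neq Q$ have $\Psi_P(0) \in \ZZ$, then all but at most one $P \neq Q$ have only integral slopes, so the condition of Corollary~\ref{cor:normal1} is satisfied for every $u$. Conversely, if Corollary~\ref{cor:normal1} holds, then specializing to $u = 0$ shows that all but at most one $P \neq Q$ have integral slope at $0$, and the key equivalence then yields $\Psi_P(0) \in \ZZ$ for those $P$. The only subtlety I anticipate is the case where $u = 0$ is a vertex of the piecewise linear structure of $\Psi_P$, so that several facets meet at $0$ and ``slope at $0$'' is not a single vector; but then all such facets pass through the common point $(0, \Psi_P(0) + a_P + 1)$ and so share the same value of $c$, hence by $\mu(v) c = 1$ the same denominator $\mu(v)$, making the dichotomy ``$\Psi_P(0) \in \ZZ$'' versus ``some facet at $0$ has non-integral slope'' clean.
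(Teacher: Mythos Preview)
Your proof is correct and follows the same route as the paper, which simply cites Corollary~\ref{cor:normal1} and Remark~\ref{rem:psi0} as a ``direct consequence.'' You have carefully unpacked what is implicit there: the argument of Remark~\ref{rem:psi0}, stated under the global hypothesis $\Psi_P(0)\in\ZZ$ for all $P$, in fact works one $P$ at a time via the lattice-distance identity $\mu(v)\,c=1$, and you use this single-$P$ equivalence (integral value at $0$ $\Leftrightarrow$ integral slope at $0$ $\Leftrightarrow$ all slopes integral) together with Corollary~\ref{cor:normal1} specialized at $u=0$ to get both directions; your handling of the case where several facets meet at $0$ is also correct, since they share the common constant term $c=\Psi_P(0)+a_P+1$.
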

\begin{proof}
This is a direct consequence of Corollary \ref{cor:normal1} and Remark \ref{rem:psi0}.
\end{proof}

\begin{example}
\label{exp:del-pezzo2}
We consider the divisorial polytope from Example~\ref{exp:del-pezzo}. When choosing $Q=\infty$ the corresponding test configuration $\X(Q,v,m)$ is given by the equations
\[x_2^2 -a\cdot x_0x_2+x_3x_4=0, \quad x_2^2-x_0x_1=0\]
with $\CC^*$-action being induced by the weights $-mv \cdot (-1,1,0,0,0)$ on $(x_0:\ldots:x_4)$ and $m$ on $a$. The special fiber for $a=0$ is the toric variety $\PP^1\!\!\times\!\PP^1/(\ZZ/2\ZZ)$ given by the polytope 
\[
\Delta_\infty = \conv\{(-1,0),(1,0),(0,-1),(0,1)\}.
\]
\begin{center}
\polytopedp
\end{center}
\end{example}

\subsection{Calculating the Futaki invariant}
Let $X$ be a Fano variety. Since the central fiber of a special test configuration $\X$ is $\QQ$-Fano, it follows that the polytope corresponding to $-K_{\X_0}$ contains a unique interior lattice point. For our polytope $\Delta_Q$, it is not difficult to see that this point is given by $u_Q=(0, -a_Q-1) \in M \times \ZZ$, where $K_{\PP^1} = \sum_P a_P \cdot P$ was the divisor of degree $-2$ coming with the Fano divisorial polytope $\Psi$. Let us denote by $b_Q$ the barycenter of $\Delta_Q^0 := \Delta_Q - u_Q$.

\begin{corollary}
\label{cor:df-formula}
For an  equivariant special test configuration $\X = \X(Q,v,m)$ the Donaldson-Futaki invariant is given by
 \[\DF(\X,\L)= \langle b_Q, (-mv,m) \rangle\]
and the modified Donaldson-Futaki invariant by
\[\DF_\xi(\X,\L)=\frac{m}{\vol \Delta_Q^0}\left(\int_{\Delta_Q^0} \langle \cdot, (-v,1) \rangle \cdot e^{\langle \cdot, \xi\rangle} \; d\mu\right).\]
\end{corollary}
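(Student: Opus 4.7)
The strategy is to plug the toric description of $\X_0$ from Corollary \ref{cor:special-fiber} into the defining limit \eqref{eq:3} and evaluate it using an Ehrhart/Riemann-sum argument.

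Since $(\X,\L)$ is special, $\X_0$ is normal, and Corollary \ref{cor:special-fiber} realizes it as the polarized toric variety with moment polytope $\Delta_Q\subset M_\RR\times\RR$ and induced $\CC^*$-action the one-parameter subgroup $\hat v := (-m\overline v, m)\in N\times \ZZ$. Since $\X_0$ is $\QQ$-Fano and $\Delta_Q$ is its anticanonical polytope, its unique interior lattice point is $u_Q$, and the canonical linearization on $\L_0\cong\CO(-\ell K_{\X_0})$ picks out the translate $\ell\cdot\Delta_Q^0$ with $\Delta_Q^0 := \Delta_Q - u_Q$.

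For this toric description the $T'$-weight spaces of $H^0(\X_0,\L_0^{\otimes k})$ are one-dimensional and indexed by $k\ell\cdot\Delta_Q^0\cap(M\times\ZZ)$. Writing $n = \dim \X_0$, Ehrhart's theorem gives $l_k\sim (k\ell)^n\vol(\Delta_Q^0)$, and the rescaling $u = k\ell u'$ turns the weighted sum defining $w_k(\hat v)$ into a Riemann sum converging to
\[
w_k(\hat v) \;\sim\; (k\ell)^{n+1}\int_{\Delta_Q^0}\langle u,\hat v\rangle\, e^{\langle u,\xi\rangle}\,du.
\]
Dividing by $k\cdot l_k\cdot\ell$ as prescribed in \eqref{eq:3} cancels the $\ell$-factors, and upon inserting $\hat v = m(-\overline v,1)$ one reads off the modified DF formula. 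Specializing to $\xi=0$ and recognizing the integral as $\vol(\Delta_Q^0)\langle b_Q,\hat v\rangle$ by definition of the barycenter yields $\DF(\X,\L) = \langle b_Q,(-m\overline v, m)\rangle$.

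The main care needed is the identification of the canonical linearization on $\L_0$ with the translated polytope $\ell\cdot \Delta_Q^0$: it is exactly this translation that replaces the naive $\Delta_Q$ by the centered $\Delta_Q^0$ in the asymptotic integral, so that the DF invariant ends up pairing with the barycenter \emph{centered at the Fano interior point}. Beyond this, the ingredients (Ehrhart growth, Riemann-sum convergence with exponential weight, and the $\ell$-cancellation built into \eqref{eq:3}) are routine.
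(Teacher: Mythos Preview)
Your proof is correct and follows essentially the same route as the paper's own argument. The paper simply quotes the identity
\[
F_{\X_0,\xi}(v') \;=\; \frac{1}{\vol \Delta_Q^0}\int_{\Delta_Q^0}\langle\,\cdot\,,v'\rangle\, e^{\langle\,\cdot\,,\xi\rangle}
\]
for the toric special fiber from \cite[Prop.~2.1 and Prop.~1.1]{wang14} and then substitutes the one-parameter subgroup $(-m\overline v,m)$ coming from Corollary~\ref{cor:special-fiber}. You instead derive that identity from scratch via the Ehrhart asymptotics and a Riemann-sum limit, which is exactly the computation carried out in the cited reference. So your argument is a self-contained unpacking of the paper's citation rather than a genuinely different approach.
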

\begin{proof}
Both claims follow from the equality 
\[F_{\X_0, \xi}(v') = \frac{1}{\vol \Delta_Q^0}\left(\int_{\Delta_Q^0} \langle \cdot, v' \rangle \cdot e^{\langle \cdot, \xi\rangle}\right),\]
for the toric special fiber $\X_0$, which can be found in the proof of \cite[Prop. 2.1]{wang14}. The right-hand-side appears there as a formula for a coefficient $F_0$, which by Proposition.~1.1 of loc.~cit.~agrees with our definition of $F_{\X_0, \xi}(v)$.
\end{proof}

This results in the following combinatorial criterion for equivariant K-stability for complexity-one Fano $T$-varieties:

\begin{theorem}
\label{thm:k-stability}
  A Fano $T$-variety corresponding to a divisorial polytope $\Psi$ is equivariantly K-stable if and only if for every $Q \in \PP^1$ such that there is at most one $P \neq Q$ with $\Psi_P(0)$ non-integral, we have 
  \[b_Q \in \{0\} \times \RR^+ \subset M_\RR \times \RR.\]
\end{theorem}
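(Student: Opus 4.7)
The plan is to combine the preceding classification of non-product special $T$-equivariant test configurations with the combinatorial Donaldson-Futaki formula (Corollary \ref{cor:df-formula}).

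First, I would invoke the classification theorem of Section~\ref{sec:test-configs}: every non-product special $T$-equivariant test configuration of $X$ is of the form $\X(Q,v,m)$ from Proposition \ref{prop:construction}. Specialness forces $\X_0$ to be normal (hence reduced and irreducible), and then $\X$ itself is normal by the remark following the definition of special test configurations. Corollary \ref{cor:normal-special-fiber} further tells us that $\X(Q,v,m)$ has normal special fibre precisely when at most one $P\neq Q$ has $\Psi_P(0)\notin\ZZ$. Call such $Q\in\PP^1$ \emph{admissible}. Thus the non-trivial special $T$-equivariant test configurations one has to check are exactly the $\X(Q,v,m)$ with $Q$ admissible, $v\in N'_\QQ$, and $m\in\NN$ such that $mv\in N'$.

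Next, Corollary \ref{cor:df-formula} gives
\[
\DF(\X(Q,v,m),\L) = \langle b_Q,(-m\overline v,m)\rangle = m\bigl((b_Q)_2 - \langle \overline b_Q,\overline v\rangle\bigr),
\]
where I decompose $b_Q=(\overline b_Q,(b_Q)_2)\in M_\RR\times \RR$ and $\overline v\in N_\QQ$ is the projection of $v\in N'_\QQ$. As $v$ varies over $N'_\QQ$ its projection $\overline v$ ranges over all of $N_\QQ$, and $m$ may be any positive integer.

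With these ingredients in place, the equivalence is proved as follows. For the forward direction, fix an admissible $Q$ and assume equivariant K-stability. If $\overline b_Q\neq 0$, choosing $\overline v$ with $\langle \overline b_Q,\overline v\rangle > (b_Q)_2$ yields a special $T$-equivariant test configuration with $\DF<0$, a contradiction; hence $\overline b_Q=0$. Specialising to $(v,m)=(0,1)$ then gives $\DF=(b_Q)_2$, and since the special fibre is the toric variety associated to $\Delta_Q$ and in particular not isomorphic to the complexity-one variety $X$, this configuration is non-trivial, so K-stability forces $(b_Q)_2>0$, i.e. $b_Q\in\{0\}\times\RR^+$. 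Conversely, if $b_Q\in\{0\}\times \RR^+$ for every admissible $Q$, then every non-trivial special $T$-equivariant test configuration satisfies $\DF=m(b_Q)_2>0$, giving equivariant K-stability.

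The main subtle point is verifying that $\X(Q,0,1)$ is genuinely non-trivial for each admissible $Q$: this is where the complexity-one hypothesis is used, since the special fibre produced from $\Delta_Q$ is toric while $X$ itself has a torus orbit of codimension one and so cannot be toric. Beyond this, the argument reduces entirely to plugging the combinatorial classification of test configurations into the explicit DF formula.
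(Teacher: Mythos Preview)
Your argument follows the same general strategy as the paper --- combine the classification of non-product special $T$-equivariant test configurations with Corollary~\ref{cor:df-formula} --- and your forward direction is correct. There is, however, a gap in your converse: showing $\DF > 0$ for every non-trivial special test configuration does not by itself yield equivariant K-stability, because one must also check that the \emph{trivial} (product) configurations have $\DF \geq 0$, i.e.\ that the Futaki character $F_X \in M_\RR$ vanishes. Your hypothesis $\overline b_Q = 0$ does not obviously give this, since you have not related $\overline b_Q$ to $F_X$.

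The paper's proof handles exactly this point, and in doing so derives $\overline b_Q = 0$ by a different route than you do. Rather than building a destabilising $\X(Q,v,m)$ when $\overline b_Q \neq 0$, the paper observes that flatness of $\L$ over $\A^1$ forces $F_{\X_0}(v) = F_X(v)$ for $v \in N$, whence $p_1(b_Q) = F_X$ for every $Q$ (here $p_1$ denotes projection to $M_\RR$). Thus $p_1(b_Q) = 0$ is precisely the condition that trivial configurations have non-negative $\DF$; given that, $p_2(b_Q) > 0$ is equivalent to positivity on all $\X(Q,v,m)$, as in your argument. To repair your converse, insert this flatness observation: from $b_Q \in \{0\}\times\RR^+$ for a single admissible $Q$ one obtains $F_X = p_1(b_Q) = 0$, so all product configurations have $\DF = 0$.
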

\begin{proof}
Let $p_1$ and $p_2$ be the projections from $M_\RR \times \RR$  to the first and second factor, respectively. 
We have $\langle p_1(b_Q), v \rangle = F_{\X_0}(v)$ using the canonical inclusion $N_\RR \rightarrow N_\RR \times \RR$. On the other hand the flatness of $\L$ over $\A^1$ implies the same for the homogeneous components $\L_i$. In particular, we have $F_{\X_0}(v)=F_X(v)$. In other words $p_1(b_Q)=0$ is sufficient and necessary for having non-negative Donaldson-Futaki invariant for trivial test-configurations.

Assuming now that $p_1(b_y)=0$, Corollary~\ref{cor:df-formula} implies that $p_2(b_Q)>0$ is equivalent to the positivity of $\DF(\X(Q,v,m),\L)$ for all choices of $v$ and $m$.
\end{proof}

\begin{remark}
  Note, that $b_Q$ is the same for every choice $Q \notin \supp \Psi$. Hence, the criterion is indeed effective. 
\end{remark}

\begin{example}
For the divisorial polytope from Example~\ref{exp:del-pezzo}. We would have to check the above condition for $\Delta_0$, $\Delta_1$ and $\Delta_\infty$. As we observed in Example~\ref{exp:del-pezzo2} we obtain $\Delta_\infty = \conv\{(-1,0),(1,0),(0,-1),(1,0)$.
The barycenter is $(0,0)$. We conclude that the test configuration destabilizing, since the Donaldson-Futaki invariant of the test configuration is $0$. On the other hand we have $F_X = p_2(b_\infty)=0$. Hence, the del Pezzo surface does not admit a K\"ahler-Ricci soliton.
\end{example}

\section{Additional symmetries}
\label{sec:addit-symmetries}
In this section, we aim to further reduce the number of test configurations we must consider by utilizing the action of the normalizer $G=\Aut_T(X)$ of the torus $T$ in $\Aut(X)$. By \cite[Lemma~2.9]{t3folds} this group is reductive. Then, by Theorem~\ref{thm:gabor} it is enough to check those test configurations which are $G$-equivariant. This is in particular useful if there are no non-trivial test configurations left to consider, in which case we immediately obtain the existence of a K\"ahler-Ricci soliton.

Let $X$ be a rational complexity-one $T$-variety.
An element $\varphi$ of $\Aut_T(X)$ is an equivariant automorphism of $X$. This implies that there is an automorphism of the character lattice $F:M' \rightarrow M'$ such that semi-invariant rational functions of degree $u \in M'$ are send to semi-invariant functions of degree $F(u)$ by this automorphism. In particular, invariant functions are send to invariant functions and we obtain a natural action of $\Aut_T(X)$ on the quotient $\PP^1$. This action descends to an action of $W=\Aut_T(X)/T$.
By \cite[Proposition~2.7]{t3folds} for $X=\TV(\D)$ an element of $W$ corresponds to a pair $(\psi,F) \in \Aut(\PP^1) \times \Aut(M')$ such that for each $P\in\PP^1$ there exists $v_P \in N'$ satisfying
\begin{enumerate}
\item All but finitely many $v_P$ are zero;
\item $\sum_P v_P = 0$; and
\end{enumerate}
 \begin{equation}
  \label{eq:automorphism}
  \forall_{P \in \PP^1}:F(\D_{P}) + v_y= \D_{\psi(P)}.
\end{equation}
The action of  $\overline{\varphi} \in \Aut_T(X)/T$ on $\PP^1$ is then induced by $\psi \in \Aut(\PP^1)$ from the above pair.

As before we consider a Fano $T$-variety $X$ corresponding to a divisorial polytope $\Psi$; the spectrum of the anti-canonical ring $A$ is given by $\TV(\D)$ for some polyhedral divisor $\D$ over the lattice $N'=\ZZ \times N$. Now, a non-trivial $T$-equivariant special test configuration $\X(Q,v,c)$ is given by the polyhedral divisor $\widetilde \D$ over the lattice $N' \times \ZZ$ with $\D = \widetilde \D \cap N'$ as in Section~\ref{sec:comb-construction}.

A pair $(\psi,F)$ for $\D$ corresponds an element of  $\Aut_{T'}(\spec A)$ which descends to an element of $\Aut_T X$ with $X = \proj A$ if the second component of $N'=  N\times Z$ is fixed under $F$. Given such a pair, the corresponding automorphism extends to a test configuration $\X(Q,v,c) = \TV(\widetilde \D)$ as above if and only if 
\[\forall_{P \in \PP^1}:\widetilde F(\widetilde \D_{P}) + v_Q= \widetilde \D_{\psi(P)}\]
holds. Here we consider the inclusion $v_Q \in N' \hookrightarrow \widetilde N = N' \times \ZZ$ and $\widetilde F$ is a linear extensions of $F$ to an automorphism of $\widetilde N$. Since by construction $\widetilde \D_Q$ in contrast to all other polyhedra $\widetilde \D_P$ has an vertex outside of $N'\subset \widetilde N$ the point $Q$ must be a fixed point of $\psi$.

For $P\in\PP^1$, let $\mu_P$ the maximal denominator of a vertex in $\D_P$ or equivalently the maximal denominator of a slope of $\Psi_P$.

\begin{theorem} 
\label{thm:soliton-criterion}
Let $X$ be a smooth Fano $T$-variety of complexity one as above. Assume that
  \begin{enumerate}
  \item There three points $P_1, P_2, P_3 \in \PP^1$ with $\mu_{P_1}, \mu_{P_2}, \mu_{P_3} > 1$; or \label{item:3points}
  \item There are two points $P_1, P_2 \in \PP^1$ with $\mu_{P_1}, \mu_{P_2} > 1$ which are swapped by an element of $\Aut_T(X)$; or \label{item:2points}
  \item $\Aut_T(X)$ acts fixed-point-free on $\PP^1$.\label{item:fp-free}
  \end{enumerate}
Then $X$ admits a K\"ahler-Ricci soliton.
\end{theorem}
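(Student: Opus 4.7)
The plan is to apply Theorem~\ref{thm:gabor}: it suffices to exhibit some $\xi\in N_\RR$ for which $(X,\xi)$ is equivariantly K-stable with respect to $G=\Aut_T(X)$. By Section~\ref{sec:test-configs} every non-trivial $G$-equivariant special test configuration has the form $\X(Q,v,m)$, and the discussion preceding the statement shows that $G$-equivariance forces $Q$ to be a fixed point of the induced $W=G/T$-action on $\PP^1$, since $\widetilde\D_Q$ is distinguished from the other $\widetilde\D_P$ by a vertex outside $N'$. I will prove that under each of the three hypotheses there is no admissible $Q$ producing a non-trivial special equivariant configuration; trivial configurations always have $v=0$ and hence give $\DF_\xi=0$ for every $\xi$ (since $F_{\X_0,\xi}$ is linear in $v$), so $(X,\xi)$ is equivariantly K-stable for any $\xi$ and the soliton follows from Theorem~\ref{thm:gabor}.

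The combinatorial heart of the argument is the per-$P$ equivalence
\[\mu_P>1 \iff \Psi_P(0)\notin\ZZ,\]
which I would isolate as a preliminary lemma extracted from the proof of Remark~\ref{rem:psi0}. Any facet of the graph of $\Psi_P+a_P+1$ lies on a hyperplane $\{y=\langle u,v\rangle+c\}$, and the Fano lattice-distance condition (Definition~\ref{def:fano-polytope}(\ref{item:distance-vertical})) gives $\mu(v)\cdot c=1$. Applied to the facet whose domain contains the origin this reads $\mu(v_0)\cdot(\Psi_P(0)+a_P+1)=1$, so when $\Psi_P(0)\in\ZZ$ the positive integer $\Psi_P(0)+a_P+1$ must equal $1$ and $\mu(v_0)=1$; concavity then forces the intercept at $0$ of any other facet to be at least $1$, and the relation $\mu(v)c=1$ forces $\mu(v)=1$ for that slope too. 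The converse direction is immediate from the same relation at the facet through $0$.

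With this lemma the case analysis is short. In case~(\ref{item:fp-free}) no $Q\in\PP^1$ is $W$-fixed, so no non-trivial $G$-equivariant test configuration exists at all. In case~(\ref{item:2points}) the element of $W$ swapping $P_1$ and $P_2$ forces every $W$-fixed $Q$ to avoid $\{P_1,P_2\}$; since $\mu_{P_1},\mu_{P_2}>1$, the lemma gives $\Psi_{P_1}(0),\Psi_{P_2}(0)\notin\ZZ$, so Corollary~\ref{cor:normal-special-fiber} rules out normality of the central fiber of any $\X(Q,v,m)$. In case~(\ref{item:3points}), for every $Q\in\PP^1$ at least two of the $P_i$ are distinct from $Q$, and Corollary~\ref{cor:normal-special-fiber} again precludes a normal central fiber. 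In all three cases there are thus no non-trivial $G$-equivariant special test configurations, and combined with the reduction in the first paragraph this completes the proof.

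The step I expect to require the most care is the per-$P$ equivalence $\mu_P>1\iff\Psi_P(0)\notin\ZZ$: a priori the facet realizing the maximal denominator $\mu_P$ could be arbitrarily far from the origin, and it is only the interplay of concavity with the Fano lattice-distance condition at \emph{every} facet that lets non-integrality anywhere on $\Psi_P$ propagate to non-integrality at $0$. Once this per-point statement is pinned down, the three hypotheses slot neatly into Corollary~\ref{cor:normal-special-fiber} to kill all candidate test configurations.
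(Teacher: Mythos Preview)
Your case analysis showing that hypotheses (\ref{item:3points})--(\ref{item:fp-free}) rule out every non-trivial $G$-equivariant special test configuration is correct and matches the paper's argument. Your per-$P$ lemma $\mu_P>1\iff\Psi_P(0)\notin\ZZ$ is also correct, and in fact makes explicit a step the paper's proof takes for granted when invoking Corollary~\ref{cor:normal-special-fiber}.

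The genuine gap is in your treatment of trivial test configurations. A trivial (product) configuration $\X\cong X\times\A^1$ is \emph{not} forced to have trivial $\CC^*$-action on the $X$ factor: the $\CC^*$-action can be $\lambda\cdot(x,a)=(\rho(\lambda)\cdot x,\lambda a)$ for any one-parameter subgroup $\rho\in N$, and the resulting modified Donaldson-Futaki invariant is $F_{X,\xi}(\rho)$, not zero. So your claim that ``trivial configurations always have $v=0$ and hence give $\DF_\xi=0$ for every $\xi$'' is false, and the conclusion that $(X,\xi)$ is equivariantly K-stable for \emph{any} $\xi$ is too strong: it would force $X$ to be K\"ahler--Einstein, contradicting the fact that the threefolds in Theorem~\ref{thm:soliton} carry \emph{non-trivial} K\"ahler--Ricci solitons.

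The fix is exactly what the paper does: invoke \cite{tian02} (Tian--Zhu) to obtain the unique $\xi\in N_\RR$ with $F_{X,\xi}=0$. For this specific $\xi$ every product configuration has vanishing $\DF_\xi$, and since (by your case analysis) there are no non-trivial $G$-equivariant special test configurations to check, $(X,\xi)$ is equivariantly K-stable and Theorem~\ref{thm:gabor} gives the soliton. With this correction your proof is essentially the paper's.
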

\begin{proof}
  First note that by \cite{tian02} there always exist a unique solution $\xi \in N_\RR$ such that $F_{X,\xi}=0$. Hence, for this choice  of $\xi$, the condition for trivial test configurations is fulfilled. Now, in the above three situations we will show that there do not exist non-trivial $\Aut_T(X)$-equivariant special test configurations. Indeed, this is obvious for (\ref{item:3points}), since the condition of Corollary~\ref{cor:normal-special-fiber} is violated. For (\ref{item:fp-free}) the claim follows from the fact that we have to choose $Q$ to be a fixed point of $\Aut_T(X)$ in order to preserve the $\Aut_T(X)$-action on the test configuration. Similarly in the situation of (\ref{item:2points}) we have to choose $Q$  to be a fixed point, but this implies $Q \neq P_1$ and $Q \neq P_2$. Hence, again the normality conditions of Corollary~\ref{cor:normal-special-fiber} is violated.
\end{proof}

By using the rational quotient map $\pi: X \dashrightarrow \PP^1$ given by the identity $\CC(\PP^1) \cong \CC(X)^T$ and the fact that $\mu_P$ encodes the order of a finite stabilizer on the fiber $\pi^{-1}(P)$ (cf. \cite[Prop. 4.11]{tcox}) 
we may reformulate Theorem~\ref{thm:soliton-criterion} as follows:

\begin{theorem} 
\label{thm:soliton-criterion2}
Let $X$ be a smooth Fano $T$-variety of complexity one. Assume that
  \begin{enumerate}
  \item $T$ acts with disconnected stabilizers on three fibers of $\pi$; or \label{item:3points2}
  \item $T$ acts with disconnected stabilizers on two fibers of $\pi$ which are swapped by an element of $\Aut_T(X)$; or \label{item:2points2}
  \item $\Aut_T(X)$ does not fix any fiber of $\pi$.\label{item:fp-free2}
  \end{enumerate}
Then $X$ admits a K\"ahler-Ricci soliton.
\end{theorem}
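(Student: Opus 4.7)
The plan is to show that Theorem~\ref{thm:soliton-criterion2} is simply a restatement of Theorem~\ref{thm:soliton-criterion} under the dictionary between the combinatorial data $\mu_P$ and the geometry of the quotient map $\pi \colon X \dashrightarrow \PP^1$. The only real content is the equivalence
\[
\mu_P > 1 \quad \Longleftrightarrow \quad T \text{ acts with disconnected stabilizers on } \pi^{-1}(P),
\]
plus the observation that the action of $\Aut_T(X)/T$ on $\PP^1$ induced by its action on $X$ agrees with the $\psi$-component of pairs $(\psi,F)$ from the discussion preceding Theorem~\ref{thm:soliton-criterion}; together these translate each of the three hypotheses of Theorem~\ref{thm:soliton-criterion} into the corresponding hypothesis of Theorem~\ref{thm:soliton-criterion2}.

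First I would recall why the displayed equivalence holds. By definition $\mu_P$ is the maximal denominator of a vertex in the polyhedral coefficient $\D_P$, equivalently the maximal denominator of a slope of $\Psi_P$. The vertices $v$ of $\D_P$ are in bijection with the vertical prime invariant divisors $D_{P,v}$ on $X$ (Section~3.2), and $\mu(v)$ is exactly the smallest natural number such that $\mu(v)v\in N$. By \cite[Prop. 4.11]{tcox}, $\mu(v)$ is the order of the (cyclic) component group of the stabilizer of a generic point of $D_{P,v}$. Since a general point of $\pi^{-1}(P)$ lies on some $D_{P,v}$, the stabilizer of the $T$-action on $\pi^{-1}(P)$ has order $\mu_P = \max_v \mu(v)$. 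Thus $\mu_P > 1$ is exactly the condition that this stabilizer is disconnected.

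Second, the action of $\Aut_T(X)$ on $\PP^1$ described before Theorem~\ref{thm:soliton-criterion} is induced by passage to $T$-invariant rational functions $\CC(\PP^1)\cong \CC(X)^T$, which coincides with the action on the base of the rational quotient $\pi$. Therefore: fibers swapped by an element of $\Aut_T(X)$ correspond to points of $\PP^1$ swapped by the corresponding element of $W$; fixed fibers correspond to fixed points on $\PP^1$; and a fixed-point-free action on the set of fibers is a fixed-point-free action on $\PP^1$.

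With these translations in hand, the three conditions (\ref{item:3points2}),(\ref{item:2points2}),(\ref{item:fp-free2}) of Theorem~\ref{thm:soliton-criterion2} become respectively the three conditions (\ref{item:3points}),(\ref{item:2points}),(\ref{item:fp-free}) of Theorem~\ref{thm:soliton-criterion}, and the conclusion follows directly. I do not expect any genuine obstacle here: the only step requiring care is invoking \cite[Prop. 4.11]{tcox} correctly so that the identification of $\mu_P$ with the order of the component group of the generic $T$-stabilizer on $\pi^{-1}(P)$ is unambiguous; everything else is bookkeeping.
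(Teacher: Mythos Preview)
Your proposal is correct and matches the paper's approach exactly: the paper presents Theorem~\ref{thm:soliton-criterion2} as a direct reformulation of Theorem~\ref{thm:soliton-criterion}, using precisely the same dictionary (that $\mu_P$ encodes the order of a finite stabilizer on $\pi^{-1}(P)$, via \cite[Prop.~4.11]{tcox}, and that the $\Aut_T(X)$-action on $\PP^1$ is the one coming from $\CC(\PP^1)\cong\CC(X)^T$). Your write-up is somewhat more detailed than the paper's one-sentence justification, but the content is the same.
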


We obtain Theorem~1.1 of \cite{kesym} as a corollary, since the notion of symmetry established there implies the vanishing of the Futaki invariant $F_X$. Hence, the vector field corresponding to the soliton has to be the trivial one, and we obtain a K\"ahler-Einstein metric.

To check the hypotheses of Corollary~\ref{thm:soliton-criterion} directly on a divisorial polytope, we have to relate the condition on the pair $(\psi,F')$ for $\D$ to a dual condition for the divisorial polytope $\Psi$. Indeed, one checks  using \eqref{eq:dualizing} in Section~\ref{sec:divisorial-polytopes} that there is an element of $\Aut_T(X)$ sending $P_1 \in \PP^1$ to $P_2 \in \PP^1$ if and only if there is a pair $(\psi, F^*)$ consisting of an automorphism of $\PP^1$ and an automorphism of $M$ such that $F^*(\Box)=\Box$ and
\begin{equation}
  \label{eq:automorphism-dual}
  (\Psi_P \circ F^*) + v_P + b_P = \Psi_{\psi(y)} + b_{\psi(P)}
\end{equation}
for some $v_P \in N = M^*$, $b_P\in\ZZ$, with $\sum_P v_P =\sum_P b_P = 0$. Here, $v_P$ is understood as a linear form on $M$ and $b_P$ as a constant function.

\begin{example}
\label{exp:threefold}
Consider the blowup of $\PP^1 \times \PP^2$ in a curve of degree $(1,2)$. This is threefold 3.21 from Mori and Mukai's list \cite{mori-mukai}. By \cite{t3folds} this threefold admits a 2-torus action and the corresponding divisorial polytope is given by Figure~\ref{fig:fano-3fold}. Here, the piecewise linear functions are given by the regions of linearity and the values at the vertices.
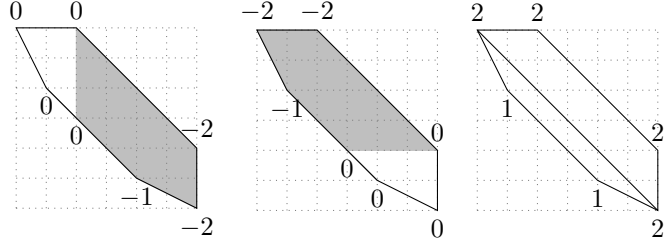
\begin{figure}[h]
  \centering
\begin{tikzpicture}[scale=0.4]
\draw[color=lightgray, fill] (-1,0) node[color=black,anchor=north] {$0$} -- (-1,3) -- (-1,3) -- (3,-1) -- (3,-3) -- (1,-2) -- (-2,1) -- (-1,0);
\draw[dotted,step=1,gray] (-3,-3) grid (3,3);
\draw (-2,1) node[anchor=north] {$0$}-- (-3,3) node[anchor=south] {$0$} -- (-1,3)node[anchor=south] {$0$} -- (3,-1) node[anchor=south] {$-2$}-- (3,-3) node[anchor=north] {$-2$}-- (1,-2) node[anchor=north] {$-1$}-- (-2,1);
\end{tikzpicture}
\begin{tikzpicture}[scale=0.4,y={(1cm,0cm)},x={(0cm,1cm)}]
\draw[color=lightgray, fill] (-1,0) node[color=black,anchor=north] {$0$} -- (-1,3) -- (-1,3) -- (3,-1) -- (3,-3) -- (1,-2) -- (-2,1) -- (-1,0);
\draw[dotted,step=1,gray] (-3,-3) grid (3,3);
\draw (-2,1) node[anchor=north] {$0$}-- (-3,3) node[anchor=north] {$0$} -- (-1,3)node[anchor=south] {$0$} -- (3,-1) node[anchor=south] {$-2$}-- (3,-3) node[anchor=south] {$-2$}-- (1,-2) node[anchor=north] {$-1$}-- (-2,1);
\end{tikzpicture}
\begin{tikzpicture}[scale=0.4]
\draw[dotted,step=1,gray] (-3,-3) grid (3,3);
\draw (-2,1) node[anchor=north] {$1$}-- (-3,3) node[anchor=south] {$2$} -- (-1,3)node[anchor=south] {$2$} -- (3,-1) node[anchor=south] {$2$}-- (3,-3) node[anchor=north] {$2$}-- (1,-2) node[anchor=north] {$1$}-- (-2,1);
\draw (-3,3) -- (3,-3);
\end{tikzpicture}
  \caption{The divisorial polytope for 3.21: $\Psi_0$, $\Psi_\infty$ and $\Psi_1$ }
  \label{fig:fano-3fold}
\end{figure}
Now, consider the pair $\left(x \mapsto \nicefrac{1}{x},
\left(\begin{smallmatrix}
  0&1\\
  1&0
\end{smallmatrix}
\right)\right)$. It fulfills the condition \eqref{eq:automorphism-dual} just with all $v_P,b_P$ being equal to zero.

Note, that $\mu_0=\mu_\infty=2$ since over the grey shaded regions the slope is $(-\nicefrac{1}{2},0)$ or $(0,-\nicefrac{1}{2})$, respectively. Hence, the conditions of Theorem~\ref{thm:soliton-criterion} are fulfilled and we obtain the existence of a K\"ahler-Ricci soliton on the threefold.

Let us now show how to obtain this result by using Theorem~\ref{thm:soliton-criterion2} instead. We have a 2-torus action on $\PP^1 \times \PP^2$ given by the weights $\left(\begin{smallmatrix}-2& 0\quad& 1& 0& 0 \\ 0&-2\quad &0&1&0\end{smallmatrix}\right)$ on the coordinates $(u:v,\quad x:y:z)$. The corresponding quotient map $\pi:\PP^1 \times \PP^2 \dashrightarrow \PP^1$ is given by \[(u:v,\quad x:y:z) \mapsto (ux^2:vy^2).\]
The invariant prime divisors $[x=0]$ and $[y=0]$ are contained in $\pi^{-1}(0)$ and $\pi^{-1}(\infty)$, respectively. From the weight matrix we see that on both prime divisors the generic stabilizer is of order $2$. Moreover, there is an involution on $\PP^1 \times \PP^2$ given by $x \mapsto y$ and $u \mapsto v$ which swaps the fibers $\pi^{-1}(0)$ and $\pi^{-1}(\infty)$. Let us consider now the curve $[z=ux^2-vy^2=0]$. It is invariant under the torus action and under the involution. Hence, both actions survive the blowup in this curve, which gives us the threefold  3.21 as above.  The new quotient map is just the composition of $\pi$ with the blowup. Now, we can see directly by Theorem~\ref{thm:soliton-criterion2}~\eqref{item:2points2} that this manifold admits a K\"ahler-Ricci soliton without referring to its combinatorial description.
\end{example}

\section{Applications to threefolds} 
\setcounter{footnote}{0}
\begin{theorem}\label{thm:threefold}
    The Fano threefolds Q, 2.24\savefootnote{one-of-family2}{This refers only to a particular element of the family admitting a 2-torus action}, 2.29, 2.32, 3.10\repeatfootnote{one-of-family2}, 3.19, 3.20, 4.4 and 4.7 from the classification of Mori and Mukai \cite{mori-mukai} admit a K\"ahler-Einstein metric.
\end{theorem}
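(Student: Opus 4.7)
The plan is to apply Theorem~\ref{thm:k-stability} to each of the listed Fano threefolds, using the divisorial polytope description from \cite{t3folds}. By Theorem~\ref{thm:gabor}, equivariant K-stability (with $\xi=0$) is equivalent to the existence of a Kähler-Einstein metric, so it suffices to verify the combinatorial condition $b_Q \in \{0\}\times\RR^{+}$ for every admissible choice of $Q\in\PP^1$.

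First, for each threefold in the list I would retrieve the divisorial polytope $\Psi = \sum_P \Psi_P\otimes P$ from the tables in \cite{t3folds}, together with the canonical divisor $K_{\PP^1}=\sum_P a_P\cdot P$ of degree $-2$ that witnesses the Fano property (Definition~\ref{def:fano-polytope}). The set $\supp\Psi$ is finite, so I only need to consider finitely many potentially relevant points $Q$. By Corollary~\ref{cor:normal-special-fiber}, the only $Q$ that give rise to a special test configuration with normal central fiber are those for which at most one $P\neq Q$ has $\Psi_P(0)\notin\ZZ$. Moreover, by the remark following Theorem~\ref{thm:k-stability}, all $Q\notin\supp\Psi$ give the same polytope $\Delta_Q$, so only one representative outside $\supp\Psi$ needs to be treated.

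Next, for each admissible $Q$ I would assemble the polytope
\[
\Delta_Q = \Big\{(u,a)\in M_\RR\times\RR \;\Big|\; u\in\Box,\; -\sum_{P\neq Q}\Psi_P(u)\leq a\leq \Psi_Q(u)\Big\}
\]
as in Corollary~\ref{cor:special-fiber}, recenter at $u_Q=(0,-a_Q-1)$ to obtain $\Delta_Q^{0}=\Delta_Q-u_Q$, and compute its barycenter $b_Q$ by triangulating $\Delta_Q^0$ and summing the weighted centroids. The check reduces to the linear condition $p_1(b_Q)=0\in M_\RR$ together with the positivity $p_2(b_Q)>0$; the former is typically forced by a symmetry of $\Psi$ visible in the polytope (for example, an involution of $\PP^1$ combined with a lattice automorphism of $M$ as in \eqref{eq:automorphism-dual}), while the latter is what genuinely has to be computed. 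Whenever $\Aut_T(X)$ acts transitively on a subset of admissible $Q$'s, I would use this to cut down the number of polytopes to examine, invoking the $G$-equivariance of Datar--Székelyhidi.

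The main obstacle is simply the bookkeeping: each of the nine threefolds produces a small handful of three-dimensional polytopes $\Delta_Q^0$ whose barycenters must be computed exactly, and the vertical symmetry $p_1(b_Q)=0$ has to be established (either combinatorially from a $\Psi$-symmetry, or by direct integration). I expect the quadric $Q$ to be essentially trivial since its torus action has many symmetries, whereas cases like 2.29 and 4.7 will require careful but finite calculation. In each case the final output is a strict inequality $p_2(b_Q)>0$ for every admissible $Q$, which by Theorem~\ref{thm:k-stability} yields equivariant K-stability and hence, via Theorem~\ref{thm:gabor}, a Kähler-Einstein metric.
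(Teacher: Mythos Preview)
Your approach is correct and is exactly what the paper does: its entire proof reads ``We just apply Theorem~\ref{thm:k-stability} to the list of threefolds in \cite{t3folds}.'' Your write-up simply spells out in more detail how that application is carried out (identifying the admissible $Q$'s via Corollary~\ref{cor:normal-special-fiber}, assembling $\Delta_Q$, and computing barycenters), which is fine; the only minor embellishment is your use of $\Aut_T(X)$-symmetries to reduce cases, which the paper reserves for the soliton theorem rather than this one, but it does no harm here.
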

\begin{proof}
  We just apply Theorem~\ref{thm:k-stability} to the list of threefolds in \cite{t3folds}.
\end{proof}

\begin{theorem}\label{thm:soliton}
  The Fano threefolds 3.8\repeatfootnote{one-of-family2}, 3.21, 4.5\repeatfootnote{one-of-family2} admit a non-trivial K\"ahler-Ricci soliton.
\end{theorem}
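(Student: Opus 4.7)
The plan is to apply Theorem~\ref{thm:soliton-criterion2} to each of the three threefolds, using the explicit divisorial polytopes in the classification of \cite{t3folds}. By Theorem~\ref{thm:gabor}, producing a K\"ahler-Ricci soliton amounts to exhibiting some $\xi\in N_\RR$ for which $(X,\xi)$ is equivariantly K-stable, and by \cite{tian02} there is a unique candidate $\xi$ characterized by $F_{X,\xi}=0$. Under the hypotheses of Theorem~\ref{thm:soliton-criterion2} the only $\Aut_T(X)$-equivariant special test configuration is the trivial one, so equivariant K-stability for $(X,\xi)$ reduces to this single Futaki identity.

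For 3.21 the verification has essentially been performed in Example~\ref{exp:threefold}: one reads $\mu_0=\mu_\infty=2$ from the divisorial polytope, and the involution of $\PP^1\times\PP^2$ exchanging the two $\PP^1$-factor coordinates descends to an element of $\Aut_T(X)$ swapping the fibers $\pi^{-1}(0)$ and $\pi^{-1}(\infty)$, verifying hypothesis~\eqref{item:2points2}. For 3.8 and 4.5 I would repeat the same recipe: (a) read off $\Psi$ and the values $\mu_P$ from \cite{t3folds}; (b) exhibit the extra symmetries of $X$ by constructing pairs $(\psi,F^*)$ satisfying~\eqref{eq:automorphism-dual}, producing the induced action of $\Aut_T(X)/T$ on $\PP^1$; and (c) check which of the alternatives (\ref{item:3points2})--(\ref{item:fp-free2}) is met. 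By analogy with 3.21, I expect each of 3.8 and 4.5 to arise as an equivariant blowup of a toric variety along a curve invariant under an obvious geometric involution that swaps two fibers of $\pi$ with disconnected stabilizers, again landing in case~\eqref{item:2points2}.

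To ensure the resulting soliton is \emph{non-trivial} I must verify $F_X\ne 0$, which rules out $\xi=0$ as the soliton vector. Combinatorially, this is the assertion that the projection $p_1(b_Q)$ of the barycenter of $\Delta_Q^0$ onto $M_\RR$ is nonzero for some (equivalently, every) $Q\notin\supp\Psi$, a quantity directly computable from $\Psi$ via Corollary~\ref{cor:df-formula}; alternatively, one can appeal to any independent obstruction showing that each of these threefolds fails to admit a K\"ahler-Einstein metric, forcing a nontrivial soliton vector field.

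The main obstacle I anticipate is not the combinatorial check of Theorem~\ref{thm:soliton-criterion2}, which is essentially mechanical, but rather confirming that each candidate pair $(\psi,F^*)$ built from a symmetry of $\Psi$ actually lifts to a global element of $\Aut_T(X)$ realizing the desired action on $\PP^1$. As illustrated by the second half of Example~\ref{exp:threefold}, this step is most conveniently handled by realizing $X$ as an equivariant blowup of an ambient toric variety whose automorphism group is transparent, so that the involution is exhibited geometrically rather than only combinatorially.
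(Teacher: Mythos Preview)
Your plan is essentially the paper's own proof: apply Theorem~\ref{thm:soliton-criterion} (equivalently Theorem~\ref{thm:soliton-criterion2}) to the data of \cite{t3folds}, with 3.21 handled as in Example~\ref{exp:threefold}. Two small corrections. First, your guess for 3.8 is off: the paper invokes case~\eqref{item:3points} there---three points with $\mu_{P_i}>1$---so no involution needs to be produced; only 3.21 and 4.5 use the two-point swap~\eqref{item:2points}. Second, the obstacle you anticipate, lifting a combinatorial pair $(\psi,F^*)$ to an honest element of $\Aut_T(X)$, is not actually an obstacle: the discussion surrounding \eqref{eq:automorphism-dual} shows that this combinatorial condition is already necessary \emph{and} sufficient for the existence of such an automorphism, so a separate geometric realization (as in the second half of Example~\ref{exp:threefold}) is optional, not required.
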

\begin{proof}
  Using the combinatorial data given in \cite{t3folds} we apply Theorem~\ref{thm:soliton-criterion} (\ref{item:3points}) to the threefold 3.8  and (\ref{item:2points}) to the threefolds 3.21 and 4.5, see also Example~\ref{exp:threefold}.
\end{proof}

\begin{remark}
  Note that 3.8 and 4.5 are non-rigid in contrast to toric Fano manifolds, which by the results of Wang and Zhu \cite{wang04} are an important source of examples of non-trivial K\"ahler-Ricci soliton.
\end{remark}

\bibliographystyle{alpha}
\bibliography{kstab}

\newcommand{\etalchar}[1]{$^{#1}$}
\begin{thebibliography}{{Smi}00}

\bibitem[AH06]{affine-tvar}
Klaus Altmann and J{\"u}rgen Hausen.
\newblock Polyhedral divisors and algebraic torus actions.
\newblock {\em Math. Ann.}, 334(3):557--607, 2006.

\bibitem[AHS08]{ahs}
Klaus Altmann, J{\"u}rgen Hausen, and Hendrik S{\"u}ss.
\newblock Gluing affine torus actions via divisorial fans.
\newblock {\em Transform. Groups}, 13(2):215--242, 2008.

\bibitem[AIP{\etalchar{+}}12]{t-survey}
Klaus Altmann, Nathan~Owen Ilten, Lars Petersen, Hendrik S{\"u}{\ss}, and
  Robert Vollmert.
\newblock The geometry of {$T$}-varieties.
\newblock In {\em Contributions to algebraic geometry}, EMS Ser. Congr. Rep.,
  pages 17--69. Eur. Math. Soc., Z\"urich, 2012.

\bibitem[BW14]{berman14}
R.~J. {Berman} and D.~{Witt Nystrom}.
\newblock {Complex optimal transport and the pluripotential theory of
  K\"ahler-Ricci solitons}.
\newblock {\em ArXiv e-prints}, January 2014.

\bibitem[CDS14]{kestab}
Xiuxiong {Chen}, Simon {Donaldson}, and Song {Sun}.
\newblock {K\"ahler-Einstein Metrics and Stability}.
\newblock {\em Int. Math. Res. Notices}, pages 2119--2125, 2014.

\bibitem[CDS15a]{zbMATH06394344}
Xiuxiong {Chen}, Simon {Donaldson}, and Song {Sun}.
\newblock {K\"ahler-Einstein metrics on Fano manifolds. I: Approximation of
  metrics with cone singularities.}
\newblock {\em {J. Am. Math. Soc.}}, 28(1):183--197, 2015.

\bibitem[CDS15b]{zbMATH06394345}
Xiuxiong {Chen}, Simon {Donaldson}, and Song {Sun}.
\newblock {K\"ahler-Einstein metrics on Fano manifolds. II: Limits with cone
  angle less than $2\pi$.}
\newblock {\em {J. Am. Math. Soc.}}, 28(1):199--234, 2015.

\bibitem[CDS15c]{zbMATH06394346}
Xiuxiong {Chen}, Simon {Donaldson}, and Song {Sun}.
\newblock {K\"ahler-Einstein metrics on Fano manifolds. III: Limits as cone
  angle approaches $2\pi$ and completion of the main proof.}
\newblock {\em {J. Am. Math. Soc.}}, 28(1):235--278, 2015.

\bibitem[DS15]{datar15}
Ved Datar and G\'abor Sz\'ekelyhidi.
\newblock {K\"ahler-Einstein metrics along the smooth continuity method}.
\newblock ArXiv:1506.07495, June 2015.

\bibitem[Har77]{MR0463157}
Robin Hartshorne.
\newblock {\em Algebraic geometry}.
\newblock Springer-Verlag, New York-Heidelberg, 1977.
\newblock Graduate Texts in Mathematics, No. 52.

\bibitem[HS10a]{hausen:10a}
J{\"u}rgen Hausen and Hendrik S{\"u}{\ss}.
\newblock The {C}ox ring of an algebraic variety with torus action.
\newblock {\em Adv. Math.}, 225(2):977--1012, 2010.

\bibitem[HS10b]{tcox}
J{\"u}rgen Hausen and Hendrik S{\"u}{\ss}.
\newblock The {Cox} ring of an algebraic variety with torus action.
\newblock {\em Advances in Mathematics}, 225(2):977 -- 1012, 2010.

\bibitem[IS11]{proj-tvar}
Nathan~Owen Ilten and Hendrik S{\"u}{\ss}.
\newblock Polarized complexity-1 {$T$}-varieties.
\newblock {\em Michigan Math. J.}, 60(3):561--578, 2011.

\bibitem[IV12]{ilten:12a}
Nathan~Owen Ilten and Robert Vollmert.
\newblock Deformations of rational {$T$}-varieties.
\newblock {\em J. Algebraic Geom.}, 21(3):531--562, 2012.

\bibitem[IV13]{upgrades}
Nathan~Owen Ilten and Robert Vollmert.
\newblock Upgrading and downgrading torus actions.
\newblock {\em J. Pure Appl. Algebra}, 217(9):1583--1604, 2013.

\bibitem[LS13]{tsing}
Alvaro {Liendo} and Hendrik {S{\"u}{\ss}}.
\newblock {Normal singularities with torus actions}.
\newblock {\em Tohoku Mathematical Journal}, 65:105--130, January 2013.

\bibitem[MM82]{mori-mukai}
Shigefumi Mori and Shigeru Mukai.
\newblock Classification of {F}ano {$3$}-folds with {$B_{2}\geq 2$}.
\newblock {\em Manuscripta Math.}, 36(2):147--162, 1981/82.

\bibitem[PS11]{petersen}
Lars Petersen and Hendrik S{\"u}{\ss}.
\newblock Torus invariant divisors.
\newblock {\em Israel J. Math.}, 182:481--504, 2011.

\bibitem[{Smi}00]{zbMATH01700888}
Karen~E. {Smith}.
\newblock {Globally F-regular varieties: Applications to vanishing theorems for
  quotients of Fano varieties.}
\newblock {\em {Mich. Math. J.}}, 48:553--572, 2000.

\bibitem[SS10]{zbMATH05712547}
Karl {Schwede} and Karen~E. {Smith}.
\newblock {Globally $F$-regular and log Fano varieties.}
\newblock {\em {Adv. Math.}}, 224(3):863--894, 2010.

\bibitem[S{\"u}{\ss}13]{kesym}
Hendrik S{\"u}{\ss}.
\newblock {K}\"ahler-{E}instein metrics on symmetric {F}ano {T}-varieties.
\newblock {\em Advances in Mathematics}, 246:100 -- 113, 2013.

\bibitem[S{\"u}{\ss}14]{t3folds}
Hendrik S{\"u}{\ss}.
\newblock Fano threefolds with 2-torus action -- a picture book.
\newblock {\em Documenta Mathematica}, 19:905--914, 2014.

\bibitem[Tia15]{tian15}
Gang Tian.
\newblock {K-Stability and K\"ahler-Einstein Metrics}.
\newblock {\em Communications on Pure and Applied Mathematics},
  68(7):1085--1156, 2015.

\bibitem[TZ02]{tian02}
Gang {Tian} and Xiaohua {Zhu}.
\newblock {A new holomorphic invariant and uniqueness of K\"ahler-Ricci
  solitons.}
\newblock {\em {Comment. Math. Helv.}}, 77(2):297--325, 2002.

\bibitem[WZ04]{wang04}
Xu-Jia {Wang} and Xiaohua {Zhu}.
\newblock {K\"ahler--Ricci solitons on toric manifolds with positive first
  Chern class.}
\newblock {\em {Adv. Math.}}, 188(1):87--103, 2004.

\bibitem[WZZ14]{wang14}
F.~{Wang}, B.~{Zhou}, and X.~{Zhu}.
\newblock {Modified Futaki invariant and equivariant Riemann-Roch formula}.
\newblock ArXiv:1408.3784, August 2014.

\end{thebibliography}
\end{document}